\newtheorem{theorem}{Theorem}[section]
\newtheorem{lemma}[theorem]{Lemma}
\newtheorem{remark}[theorem]{Remark}
\newtheorem{definition}[theorem]{Definition}
\newtheorem{asu}{Assumption}
\makeatletter \@addtoreset{equation}{section} \makeatother
\def\bt{\begin{theorem}}
\def\et{\end{theorem}}
\def\ba{\begin{array}}
\def\ea{\end{array}}
\def\bl{\begin{lemma}}
\def\el{\end{lemma}}
\def\bes{\begin{eqnarray}}
\def\ees{\end{eqnarray}}
\newcommand{\pp}{\partial_2}
\newcommand{\pn}{\partial_1}
\newcommand{\dspt}{\displaystyle}
\newcommand{\beq}{\begin{equation}}
\newcommand{\eeq}{\end{equation}}
\def\G#1{
  \gnum=`#1
  \ifnum \gnum>`Z
     \advance\gnum by-`a
     \ifcase \gnum \alpha \or \beta \or \gamma \or \delta \or
        \epsilon\or\phi\or\theta\or\eta\or\iota\or\or\kappa\or
        \lambda\or\mu\or\nu\or\or\pi\or\xi\or\rho\or\sigma\or
        \tau\or\upsilon\or\or\omega\or\chi\or\psi\or\zeta\fi
  \else
     \advance\gnum by-`A
     \ifcase \gnum \Alpha \or \Beta \or \Gamma \or \Delta \or
        \Epsilon\or\Phi\or\Theta\or\Eta\or\Iota\or\or\Kappa\or
        \Lambda\or\Mu\or\Nu\or\or\Pi\or\Xi\or\Rho\or\Sigma\or
        \Tau\or\Upsilon\or\or\Omega\or\Chi\or\Psi\or\Zeta\fi
  \fi}
\def\({\left(\begin{array}{cccccc}}
\def\){\end{array}\right)}
\def\bes{\begin{eqnarray}}
\def\ees{\end{eqnarray}}
\begin{document}

\title{Global solution and singularity formation for the supersonic expanding wave of compressible Euler equations with radial symmetry}

\author{
Geng Chen\thanks{Department of Mathematics,
University of Kansas, Lawrence, KS
66045 ({\tt gengchen@ku.edu}).}
\and
Faris A. El-Katri\thanks{Department of Mathematics,
University of Kansas, Lawrence, KS
66045 ({\tt elkatri@ku.edu}).}
\and
Yanbo Hu\thanks{Department of Mathematics, Zhejiang University of Science and Technology, Hangzhou 310023, PR China ({\tt yanbo.hu@hotmail.com}).}
\and
Yannan Shen\thanks{Department of Mathematics,
University of Kansas, Lawrence, KS
66045 ({\tt yshen@ku.edu}).}
}
\date{}
\maketitle

\begin{abstract}
\noindent
In this paper, we define the rarefaction and compression characters for the supersonic expanding wave of the compressible Euler equations with radial symmetry. Under this new definition, we show that solutions with rarefaction initial data will not form shock in finite time, i.e. exist global-in-time as classical solutions. On the other hand, singularity forms in finite time when the initial data include strong compression somewhere. Several useful invariant domains will be also given.
\end{abstract}

2010\textit{\ Mathematical Subject Classification:} 76N15, 35L65, 35L67.

\textit{Key Words:} Compressible Euler equations, singularity formation, supersonic wave, hyperbolic conservation laws.
\section{Introduction}

We are interested in the global existence of smooth solution to the radially symmetric Euler equations with isentropic flow. The radially symmetric solution of Euler equations with $\gamma$-law pressure satisfies \cite{courant}
\begin{align}
	(r^m\rho)_t+(r^m\rho u)_r=&0\,,\label{Euler1}\\
	(r^m\rho u)_t+(r^m\rho u^2)_r+ r^m p_r=&0\,,\label{Euler2}\\
	p=&K\rho^\gamma.\label{Euler3}
\end{align}
Here $m=1, 2$ for flows with cylindrical or spherical symmetry, respectively, and $r>0$ denotes
the spatial coordinate. The symbols have their ordinary meaning: $\rho(r,t)$ is density, $u(r,t)$  is the particle velocity, $p(r,t)$ is the pressure and the adiabatic constant $\gamma>1$ for the isentropic gas.

The compressible Euler system is the most representative example of the nonlinear hyperbolic conservation laws, and its research has a very long history. A well-known fact is that, even for smooth and small initial data, its classical solutions may form gradient blowup in finite
time due to the highly nonlinear structures. The early study of breakdown of classical solutions for the compressible Euler equations were presented, among others, in \cite{Ali, Schen, John, Kong, lax2, Liu1, sideris}.

Later, the first author in \cite{G9,G10,CCZ,CY,CYZ, CPZ} gave a sequence of fairly complete results on the large data singularity formation theory for the isentropic and non-isentropic Euler equations in one space dimension, by adding an optimal time dependent lower bound on density and using the framework of Lax in \cite{lax2}.
Recently, the singularity formation result and  density lower bound have been extended to Euler equations with radial symmetry \eqref{Euler1}-\eqref{Euler3} in \cite{CCW}.

In \cite{Ch1}, Christodoulou introduced the geometric framework to
study the singularity formation of smooth solutions for the relativistic Euler equations in multi space dimensions.
The work by Christodoulou and Miao \cite{Ch2} revealed the fine geometric property of the singular hyperplane
for the multi-dimensional compressible Euler equations with isentropic irrotational and small perturbed initial data. In \cite{Luk1}, Luk and Speck studied the 2-d case with small but nonzero vorticity, even at the location of shock. Also see the related works in \cite{Di, Luk2} etc.

There are many recent results on the construction of shock formation which accurately describe the blowup process of the gradient of the solution.
We refer the reader to a series of related works  \cite{Vicol0, Vicol1, Vicol2, Vicol3, Vicol4, Vicol5}, in which the pre-shocks are accurately constructed.

On the other hand, it is meaningful and valuable to seek the conditions of the initial data to
guarantee the global existence of smooth (classical) solutions for the compressible Euler equations. A remarkable result is that classical solutions of isentropic Euler equations exist globally if and only if the initial data are rarefactive everywhere \cite{CCZ, CPZ,lax2}.
Here the rarefaction/compression character of solutions in two families are defined by the sign of the gradient on a pair of Riemann invariants. See other results on 1-d classical solutions in
\cite{CCZ, CY, LiBook, Li daqian, Zhu}.

For the multi-dimensional compressible Euler equations with isentropic gas, under the assumptions that the initial density is small smooth and the initial velocity is
smooth enough and forces particles to spread out, Grassin \cite{Grassin} established the global existence of smooth solutions. This global existence result was subsequently extended to the van der Waals gas in \cite{LM}. In a recent paper \cite{Lai}, Lai and Zhu established a global existence result for the two-dimensional axisymmetric Euler equations with a class of initial data that are constant state near the origin.
Also see earlier result by Godin \cite{Godin} on the lifespan of smooth solutions for the spherical symmetric Euler equations with initial data that are a small perturbation of a constant state.

Recently, Sideris \cite{sideris1} constructed a special of affine motions
to obtain the global existence of smooth solutions for the three-dimensional compressible isentropic Euler equations with a physical vacuum free boundary condition. In \cite{HJ}, Had${\rm \breve{z}}$i${\rm \acute{c}}$ and Jang proved that small perturbations of the expanding affine motions are globally-in-time stable, which leads to the global existence result for non-affine expanding solutions to the multi-dimensional isentropic Euler equations for the adiabatic exponent $\gamma\in(1,\frac{5}{3}]$. Their result was subsequently extended to $\gamma>1$ by Shkoller and Sideris in \cite{Shkoller-Sideris} and then to $\gamma>1$ for the non-isentropic system by Rickard, Had${\rm \breve{z}}$i${\rm \acute{c}}$ and Jang in \cite{Rickard2}.
Rickard \cite{Rickard1} discussed the global existence of the compressible isothermal Euler equations with heat transport by small perturbations on Dyson's isothermal affine solutions.
In \cite{PHJ}, Parmeshwar,  Had${\rm \breve{z}}$i${\rm \acute{c}}$ and Jang verified the global existence of expanding solutions to the isentropic Euler equations in the vacuum setting with small densities without relying on a perturbation.

In the current paper, we define the rarefaction and compression characters for expanding wave of the radially symmetric Euler equations with large initial data. We prove that solutions with rarefaction initial data will exist smoothly for any time. On the other hand, strong initial compression will form finite time singularities. This result is parallel to the equivalent condition on singularity formation for the 1-d isentropic Euler equations. For \eqref{Euler1}-\eqref{Euler3}, we expect there may be shock-free solutions including weak compression, see example for the non-isentropic Euler equations in \cite{CY,CCZ}.

The key ingredient of this paper is to find a pair of accurate gradient variables for \eqref{Euler1}-\eqref{Euler3}, whose signs define the rarefaction/compression characters of solutions in two different characteristic families, also named as R/C characters.
Then we use the Riccati type equations on these gradient variables to prove the desired results on global existence and singularity formation.

A natural idea, mimic the 1-d isentropic solution, is to use the gradient of Riemann variables, as in \cite{CCW}. However, such choice of gradient variables, fail to include the impact of the source term caused by radial symmetry.

The new idea in this paper comes from the belief that the stationary solution of  \eqref{Euler1}-\eqref{Euler3} is
neither rarefactive nor compressive. Then the gradient variables used to define R/C characters shall vanish in the stationary solution. So we may use the gradient along characteristic on some function which takes constant value in the stationary solution. More precisely, noticing that $r^m\rho u$ is constant in the stationary solution, we define the R/C characters  $\beta$ and $\alpha$ by
\beq\label{def_RC}
\beta=-\frac{\partial_2(r^m \rho u)}{r^m \rho c_1},\qquad
\alpha=-\frac{\partial_1(r^m \rho u)}{r^m \rho c_2},
\eeq
corresponding to the first and second characteristic families with speed
\[
	c_1 =u-\sqrt{p_\rho},\qquad c_2 =u+\sqrt{p_\rho},
\]
respectively,
where
\[
	\partial_2=\partial_t+c_2 \partial_r, \qquad \partial_1=\partial_t+c_1 \partial_r.
\]

Next, we introduce our main results verifying the effectiveness of this pair of variables for R/C characters.
We consider expanding supersonic waves satisfying the following initial condition.
\begin{asu}\label{asu_1}
Assume $1<\gamma<3$. For the initial data $(\rho_0(r),u_0(r))=(\rho(r,0), u(r,0))$ of \eqref{Euler1}-\eqref{Euler3}, there exists a uniform constant $C_0$ such that
\begin{equation}\label{inbo0}
0<\frac{2\sqrt{K\gamma}}{\gamma-1}\rho_0^{\frac{\gamma-1}{2}}(r)\leq u_0(r)\leq C_0,
\end{equation}
for any $r\in(b,\infty)$, with any given $b>0$.
\end{asu}
When we consider the initial-boundary value problem, as in Figure \ref{Fig:1}, with the 1-characteristic starting from $(b,0)$, denoted by $r=B_b(t)$, as the left boundary curve, we further assume that the boundary data satisfy the following property.
\begin{asu}\label{asu_2}
Assume $1<\gamma<3$. There exists a uniform constant $C_0$ such that
\begin{equation}\label{bbo0}
0<\frac{2\sqrt{K\gamma}}{\gamma-1}\rho_{b}^{\frac{\gamma-1}{2}}(t)\leq u_b(t)\leq C_0,
\end{equation}
for any $t\geq0$, where $(\rho_b(t), u_b(t))=(\rho(B_b(t), t), u(B_b(t), t))$.
\end{asu}

We know that inequalities \eqref{inbo0} form an invariant domain holding for any time before singularity formation, see \cite{CCW}. This similar property was first established by Chen in \cite{GQC} for studying the $L^\infty$ entropy solution.
So there will be a uniform upper bound on $\rho$ and $|u|$.

Now we define the rarefactive and compressive waves for smooth solutions. Here smooth solutions always mean $C^1$ solutions in this paper.
\begin{definition}
Consider smooth solutions \eqref{Euler1}-\eqref{Euler3} on some open domain on the $(r,t)$-plane, satisfying Assumption \ref{asu_1} (and also Assumption \ref{asu_2} if we consider the initial boundary problem).
\begin{itemize}
\item The solution is $1$-Rarefactive if $\beta\geq 0$ and $1$-Compressive if  $\beta\leq 0$.
\item
The solution is $2$-Rarefactive if $\alpha\geq0$ and $2$-Compressive if  $\alpha\leq 0$.
\end{itemize}
\end{definition}
For definitions of both rarefactive and compressive waves, we include $\alpha=0$ and $\beta=0$ for convenience.

The main results of this paper, for \eqref{Euler1}-\eqref{Euler3} satisfying Assumption \ref{asu_1} and \ref{asu_2}, can be summarized as:
\begin{itemize}
\item[1.] When the initial data are rarefactive, the solution is smooth globally: Theorem \ref{ex 1}, \ref{ex 2}, \ref{ex 3_0} and \ref{ex 3}
\item[2.] When the initial data include some strong compression, i.e. at some point $\alpha$ or $\beta$ is very negative, singularity form in finite time: Theorem \ref{thm_sing}.
\end{itemize}

Since physically $u_0(0)=0$, we note that the Assumption \ref{asu_1} is satisfied for the entire half line $\{r\geq 0\}$, only when $\rho_0(0)=0$.

For the existence result, we consider
both vacuum and non-vacuum cases at the origin. When $\rho_0(0)=0$, we assume that Assumption \ref{asu_1} is satisfied for the entire half line $\{r\geq 0\}$. For the non-vacuum case, we apply the
affine solution, inspired by \cite{CCW, sideris1}, to fill a small neighborhood centered at the origin.

It is well-known that the main difficulty in establishing the global existence of smooth solutions is
obtaining the a priori estimates of the solution and its gradient variables. First, an
appropriate density lower bound estimate is needed to extend the local solution to the entire domain. Fortunately, this result has already been proved in \cite{CCW}, following by the idea used in \cite{G9}. Also see \cite{CPZ,G10}

The key estimate on gradient variables comes from two invariant domains in the $(\alpha,\beta)$ plane, by studying the Riccati equations on $\alpha$ and $\beta$. On the other hand, the proof of singularity formation result heavily relies on the Riccati equations too. These equations will be used for the future study in other cases, not satisfying the Assumption \ref{asu_1}, including the imploding subsonic waves.

The rest of the paper is organized as follows. In Section \ref{S2}, we calculate the rarefactive and compressive characters. In Section \ref{S3}, we derive the Riccati equations of the R/C characters. In Section \ref{S4}, we establish the invariant domains for the smooth solution itself and the R/C characters. Moreover, we deduce
a density lower bound estimate which depends only on the time and is independent of the spatial variable $r$. In Section \ref{S5}, we prove the global existence of $C^1$-solution on the entire domain $t\geq0, r\geq0$ for rarefaction initial data. Finally, in Section \ref{S6}, we show that, when the initial data include strong compression somewhere, the smooth solution can form singularity in a finite time.

\section{The R/C characters}\label{S2}
When $\gamma>1$, we use the sound speed
\[h=\sqrt{p_\rho}=\sqrt{K\gamma}\,\rho^\frac{\gamma-1}{2}
\]
as the variable to take the place of $\rho$.
So, for smooth solutions, equations (\ref{Euler1})-(\ref{Euler2}) can be written as
\begin{align}
h_t+u\,h_r+\frac{\gamma-1}{2}\,h\, u_r=&-\frac{\gamma-1}{2}\,\frac{m}{r}\,u\,h\,,\label{h eq}\\
u_t+u\,u_r+\frac{2}{\gamma-1}\,h\,h_r=\,&0\,,\label{u eq}
\end{align}
with characteristic speeds
\bes
c_1=u-h\,,\quad c_2=u+h\,.\label{lrc}
\ees
Introduce the Riemann variables $w$ and $z$
\bes
w=u+\frac{2}{\gamma-1}h,\qquad
z=u-\frac{2}{\gamma-1}h\,.\label{def r}
\ees
Then we have the governing equations of $(w,z)$
\begin{align}
w_t+c_2\,w_r=&-\frac{m}{r}\,u\,h\,,\label{w eq}\\
z_t+c_1\,z_r=&\frac{m}{r}\,u\,h\,.\label{z eq}
\end{align}

%

Now we calculate the R/C characters $\alpha$ and $\beta$ defined in \eqref{def_RC}.
%
By \eqref{Euler2} and the definition of $\partial_1$, it is easy to calculate that, when $c_2\neq 0$ and $r>0$,
\begin{align}
\alpha=&-\frac{\partial_1(r^m \rho u)}{r^m \rho c_2}\nonumber\\
	=&-\frac{1}{r^m \rho (u+h)} \bigl( (r^m \rho u)_t + c_1  (r^m \rho u)_r \bigr)\nonumber\\
	=&-\frac{1}{r^m \rho (u+h)} \bigl( -(r^m \rho u^2)_r -  r^m p_r+ (u-h)  (r^m \rho u)_r \bigr)\nonumber\\
	=&-\frac{1}{r^m \rho (u+h)} \bigl( -r^m \rho u u_r -  r^m p_r-h  (r^m \rho u)_r \bigr)\nonumber\\
		=&-\frac{1}{r^m \rho (u+h)} \bigl( -r^m \rho u u_r -  r^m p_r-h  r^m \rho u_r -h  r^m \rho_r u-\frac{m}{r}h  r^m \rho u\bigr) \nonumber\\
	=& u_r +\frac{2}{\gamma-1}h_r+\frac{m}{r} \frac{hu}{c_2}\nonumber,
\end{align}
where in the last step, we used
\begin{align}
-r^mp_r-h  r^m \rho_r u
=&-r^m(K\gamma \rho^{\gamma-1}\rho_r+h\rho_r u)\nonumber\\
=&-r^m(h^2+h u)\rho_r\nonumber\\
=&-\frac{2}{\gamma-1}r^m\rho(u+h)h_r.\nonumber
\end{align}

Similarly,  when $c_1\neq 0$ and $r>0$,, we have
\begin{align}
\beta=&-\frac{\partial_2(r^m \rho u)}{r^m \rho c_1}
	=-\frac{1}{r^m \rho  (u-h)} \bigl( (r^m \rho u)_t + c_2  (r^m \rho u)_r \bigr)\nonumber\\
	=&-\frac{1}{r^m \rho  (u-h)} \bigl( -(r^m \rho u^2)_r -  r^m p_r+ (u+h)  (r^m \rho u)_r \bigr)\nonumber\\
	=& u_r -\frac{2}{\gamma-1}h_r-\frac{m}{r} \frac{hu}{c_1}. \nonumber
\end{align}
In summary, when $c_1\neq0$, $c_2\neq 0$ and $r>0$, for smooth solution,  \eqref{def_RC} is equivalent to
\bes
\alpha=u_r+\frac{2}{\gamma-1}h_r+\frac{m}{r}\frac{\,h\, u}{{c_2}},\label{def alpha}\\
\beta=u_r-\frac{2}{\gamma-1} h_r-\frac{m}{r}\frac{ \,h\,u}{{c_1}}.\label{def beta}
\ees
We remark that $\alpha$ and $\beta$  are different from derivatives of Riemann variables, i.e. $w_r$, $z_r$.

\section{Riccati equations}\label{S3}

Now, we can derive the following Riccati equations for Euler equations.
\begin{lemma}\label{lemma_ric}
For smooth solution of \eqref{Euler1}-\eqref{Euler3}, we have the following Riccati type equations on $\alpha$ and $\beta$ defined in \eqref{def alpha} and \eqref{def beta}, when $r>0$ and $c_1c_2\neq 0$,
\bes\label{beta_eq}
\pn\beta=-\frac{1+\gamma}{4}\beta^2-\frac{3-\gamma}{4}\alpha\beta+A_1\alpha-B_1\beta,
\ees
where
\beq\label{A1Def}
A_1=
\frac{m c_2}{2rc_1^2}(\frac{\gamma-1}{2}u^2-h^2),\eeq
\beq\label{B1Def}
B_1=\frac{m}{rc_1^2}\bigg(\frac{\gamma-1}{4}u^3-\frac{1}{2}h^3
-\frac{\gamma-1}{4}u^2h+
\frac{1}{2}uh^2+
\frac{hu{c_1}}{{c_2}}(h+\frac{\gamma-1}{2}u)\bigg),\nonumber
\eeq
and \bes\label{alphacd1}
\pp\alpha=-\frac{\gamma+1}{4}\alpha^2-\frac{3-\gamma}{4}\alpha\beta+A_2\beta-B_2\alpha,
\ees
where
\beq\label{alphacd2}
A_2=
\frac{m c_1}{2rc_2^2}(\frac{\gamma-1}{2}u^2-h^2),
\eeq
\beq\label{alphacd3}
B_2=\frac{m}{rc_2^2}\bigg(\frac{\gamma-1}{4}u^3+\frac{1}{2}h^3+
\frac{\gamma-1}{4}u^2h+
\frac{1}{2}uh^2+\frac{hu{c_2}}{{c_1}}(h-\frac{\gamma-1}{2}u)\bigg).
\eeq
\end{lemma}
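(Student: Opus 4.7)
The plan is to derive the Riccati equation for $\beta$ by taking $\partial_1$ of the definition $\beta = z_r - \frac{m}{r}\frac{hu}{c_1}$; the equation for $\alpha$ then follows by the symmetric computation using $\alpha = w_r + \frac{m}{r}\frac{hu}{c_2}$. I will focus on the $\beta$ case.

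First I would use the commutator identity
\begin{equation}
\partial_1 z_r = \partial_r(\partial_1 z) - c_{1,r}\, z_r, \qquad c_{1,r}=u_r-h_r,\nonumber
\end{equation}
and invoke the Riemann equation \eqref{z eq} in the form $\partial_1 z = \frac{m}{r}uh$, so that
\begin{equation}
\partial_1 z_r = -\frac{m}{r^2}uh+\frac{m}{r}(u_r h + u h_r) - (u_r-h_r)z_r.\nonumber
\end{equation}
Next I would compute $\partial_1\!\left(\frac{mhu}{rc_1}\right)$ by expanding the quotient and using the Euler equations \eqref{h eq} and \eqref{u eq} to reduce $\partial_1 u$, $\partial_1 h$, and $\partial_1 c_1$ to expressions in $u_r,h_r$ and the source term $\frac{m}{r}uh$. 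A convenient intermediate is $\partial_1 u = -h\,w_r$ (obtained by eliminating $u_t$ from \eqref{u eq}), and an analogous identity for $\partial_1 h$.

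At this point, every occurrence of $u_r$, $h_r$, $w_r$, $z_r$ should be eliminated in favor of $\alpha$ and $\beta$ via the inversion formulas
\begin{equation}
u_r=\frac{\alpha+\beta}{2}+\frac{m h^2 u}{r c_1 c_2},\qquad \frac{2}{\gamma-1}h_r=\frac{\alpha-\beta}{2}-\frac{m h u^2}{r c_1 c_2},\nonumber
\end{equation}
which come directly from \eqref{def alpha}--\eqref{def beta}. Substituting these into the sum of the two contributions and collecting like terms should produce exactly $-\frac{1+\gamma}{4}\beta^2-\frac{3-\gamma}{4}\alpha\beta$ for the quadratic piece (whose coefficients are forced by the one-dimensional Lax calculation, since the source-free part reduces to it) plus a linear-in-$(\alpha,\beta)$ remainder times $\frac{m}{r}$.

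The main obstacle is purely algebraic bookkeeping: the source term $\frac{m}{r}uh$ and the quotients involving $c_1,c_2$ generate many terms of the form $\frac{m}{r}\cdot(\text{rational in } u,h)$, and it is easy to mis-collect them. The strategy I would follow to make this tractable is to (i) compute the quadratic part first by temporarily dropping all source contributions, verifying it matches the classical isentropic Lax form $-\frac{1+\gamma}{4}\beta^2-\frac{3-\gamma}{4}\alpha\beta$; (ii) then isolate the linear terms, grouping them by whether they multiply $\alpha$ or $\beta$, and use the identities $c_2-c_1=2h$ and $c_1 c_2 = u^2-h^2$ repeatedly to rewrite the coefficients in the compact form displayed for $A_1$ and $B_1$; (iii) finally, observe that the $\partial_2\alpha$ equation follows from the same computation under the symmetry $(c_1,\beta,z)\leftrightarrow(c_2,\alpha,w)$ together with the sign change coming from $\partial_2 w=-\frac{m}{r}uh$ versus $\partial_1 z=+\frac{m}{r}uh$, which accounts for the sign of the $h^3$ and $u^2h$ terms distinguishing $B_1$ from $B_2$.
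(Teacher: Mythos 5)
Your proposal is correct and ends up in the same place as the paper's proof, but you organize the intermediate algebra differently, and the difference is worth noting. The paper works directly in the $(u,h)$ variables: it writes $\pn\beta$ as $(u_r-\tfrac{2}{\gamma-1}h_r-\tfrac{m}{r}\tfrac{hu}{c_1})_t+c_1(\cdot)_r$, substitutes the $r$-differentiated Euler equations \eqref{h eq}--\eqref{u eq} to eliminate the mixed second derivatives $u_{rt},h_{rt}$, and only then converts $u_r,h_r$ to $\alpha,\beta$ via \eqref{hr}--\eqref{ur}. You instead start from the Riemann form $\beta=z_r-\tfrac{m}{r}\tfrac{hu}{c_1}$, use the commutator identity $\partial_1 z_r=\partial_r(\partial_1 z)-c_{1,r}z_r$ together with $\partial_1 z=\tfrac{m}{r}uh$ from \eqref{z eq}, and handle the source quotient with the intermediates $\partial_1 u=-hw_r$, $\partial_1 h=-\tfrac{\gamma-1}{2}hw_r-\tfrac{\gamma-1}{2}\tfrac{m}{r}uh$. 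This avoids ever forming $z_{rt}$ or $z_{rr}$ explicitly and is somewhat cleaner bookkeeping. Your inversion formulas for $u_r$ and $\tfrac{2}{\gamma-1}h_r$ in terms of $\alpha,\beta$ are correct (they are precisely \eqref{ur} and \eqref{hr} after simplifying $\tfrac{1}{c_2}-\tfrac{1}{c_1}=-\tfrac{2h}{c_1c_2}$ and $\tfrac{1}{c_1}+\tfrac{1}{c_2}=\tfrac{2u}{c_1c_2}$). Your two sanity checks are also genuinely useful and not present in the paper: the quadratic coefficients are pinned down by the $m=0$ reduction to the classical 1-d Lax/Riccati identity, and the $B_1\leftrightarrow B_2$ discrepancy (signs of $h^3$ and $u^2h$, and the $h\pm\tfrac{\gamma-1}{2}u$ factor) is exactly what the $h\mapsto -h$ (equivalently $c_1\leftrightarrow c_2$, $z\leftrightarrow w$, $\partial_1\leftrightarrow\partial_2$) symmetry dictates. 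In short: same proof, alternative and slightly more structured route through the algebra, with useful consistency checks added.
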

Equations \eqref{beta_eq} and \eqref{alphacd1} are homogeneous, i.e. the right hand sides of \eqref{beta_eq} and \eqref{alphacd1} vanish when $\alpha$ and $\beta$ both vanish. This is because we find $\alpha$ and $\beta$ from the stationary solution, so when $\alpha=\beta=0$, their time derivatives also vanish. This property is crucial for us to prove several important invariant domains on gradient variables, which serve as our basis to study the existence of classical solutions and singularity formation.
\begin{proof}
We first calculate the equation of $\alpha$ by \eqref{def alpha}
\begin{align}
\pp\alpha=&(u_r+\frac{2}{\gamma-1}h_r+\frac{m}{r}\frac{\,h\, u}{{c_2}})_t+{c_2}(u_r+\frac{2}{\gamma-1}h_r+\frac{m\,h\, u}{rc_2})_r\nonumber\\
=&u_{rt}+\frac{2}{\gamma-1} h_{rt}+(u+h)u_{rr}+(u+ h)\frac{2}{\gamma-1} h_{rr}\nonumber\\
&+\frac{m}{r}(\frac{hu}{{c_2}})_t+{c_2}(\frac{m}{r}\frac{hu}{{c_2}})_r.\label{Sprime}
\end{align}
It follows by (\ref{h eq})-(\ref{u eq}) that
\bes
h_{rt}+uh_{rr}+u_rh_r+\frac{\gamma-1}{2}u_rh_r+\frac{\gamma-1}{2}u_{rr}h =-\frac{\gamma-1}{2}(\frac{m}{r}uh)_r,\label{z_x}
\ees
and
\bes
u_{rt}+uu_{rr}+u_r^2+\frac{2}{\gamma-1}h_r^2+\frac{2}{\gamma-1}h_{rr}h=0.\label{u_x}
\ees
By (\ref{z_x})-(\ref{u_x}), one obtains
\begin{align}
&u_{rt}+\frac{2}{\gamma-1}h_{rt}+uu_{rr}+hu_{rr}+u\frac{2}{\gamma-1}h_{rr}+\frac{2}{\gamma-1}hh_{rr} \nonumber\\
=& -\frac{2}{\gamma-1}h_r^2-u_r^2-\frac{\gamma+1}{\gamma-1}u_r h_r-(\frac{m}{r}uh)_r. \label{u_rt}
\end{align}
Substituting (\ref{u_rt}) into (\ref{Sprime}) yields
\begin{align}
\pp\alpha=&\bigg(\frac{m}{r}(\frac{uh}{{c_2}})_t-\frac{muh}{rc_2}{c_2}_r\bigg) +\bigg(-\frac{\gamma+1}{\gamma-1}u_rh_r-u_r^2-\frac{2}{\gamma-1}h_r^2\bigg) \nonumber \\
\triangleq &(i)+(ii). \nonumber
\end{align}

Furthermore, we apply (\ref{def alpha}) and (\ref{def beta}) to get
\begin{align*}
\alpha+\beta =& 2u_r+\frac{m}{r}uh(\frac{1}{{c_2}}-\frac{1}{{c_1}}),\\
\alpha-\beta =& \frac{4}{\gamma-1}h_r+\frac{m}{r}uh(\frac{1}{{c_1}}+\frac{1}{{c_2}}),
\end{align*}
which mean that
\begin{align}
h_r =& \frac{\gamma-1}{4}\bigg(\alpha-\beta-\frac{m}{r}uh(\frac{1}{{c_1}}+\frac{1}{{c_2}})\bigg),\label{hr}\\
u_r =& \frac{1}{2}\bigg(\alpha+\beta-\frac{m}{r}uh(\frac{1}{{c_2}}-\frac{1}{{c_1}})\bigg).\label{ur}
\end{align}

By (\ref{hr}) and (\ref{ur}), we find that
\begin{align}
&(ii)
=-(u_r+h_r)(u_r+\frac{2}{\gamma-1}h_r)\nonumber\\
=&-\bigg(\frac{3-\gamma}{4}\beta+\frac{\gamma+1}{4}\alpha+ \frac{m}{r}\frac{uh}{{c_2}{c_1}}(h-\frac{\gamma-1}{2}u)\bigg)\big(\alpha-\frac{m}{r}\frac{uh}{{c_2}}\big) \nonumber\\
=&-\frac{\gamma+1}{4}\alpha^2-\frac{3-\gamma}{4}\alpha\beta +(\frac{3-\gamma}{4}\beta+\frac{\gamma+1}{4}\alpha)\frac{m}{r}\frac{uh}{c_2}\nonumber\\
&-\frac{m}{r}\frac{uh(h-\frac{\gamma-1}{2}u)}{{c_1}{c_2}}\alpha +\frac{m^2}{r^2}\frac{u^2h^2}{c_1c^2_2}(h-\frac{\gamma-1}{2}u), \nonumber
\end{align}
and
\begin{align}
(i)=&\frac{m}{r}(\frac{hu}{{c_2}})_t-\frac{m}{r}\frac{uh}{{c_2}}{c_2}_r\nonumber\\
=&\frac{m}{r}\frac{1}{{c_2}}(uh)_t-\frac{1}{c_2^2}{c_2}_t \frac{m}{r}uh-\frac{m}{r}\frac{uh}{{c_2}}{c_2}_r\nonumber\\
=&\frac{m}{r c_2^2}\big({c_2}(uh)_t-uh{c_2}_t-uh{c_2}{c_2}_r\big),\nonumber
\end{align}
where
\begin{align}
&{c_2}(uh)_t-uh{c_2}_t-uh{c_2}{c_2}_r\nonumber\\
 =&u^2h_t
 +h^2u_t-hu^2u_r-u^2hh_r-h^2uu_r-uh^2h_r\nonumber\\
=&-h_r\big(u^3+\frac{2}{\gamma-1}h^3+uh^2+u^2h\big)-u_r\big(2h^2u+\frac{\gamma+1}{2}u^2h\big) -\frac{\gamma-1}{2}u^3ha\nonumber\\
=&\frac{\gamma-1}{4}\big(\alpha-\beta-\frac{m}{r}uh(\frac{1}{{c_1}}+\frac{1}{{c_2}})\big)
\big(-u^3-\frac{2}{\gamma-1}h^3-uh^2-u^2h\big)\nonumber\\
&+\frac{1}{2}\big(\alpha+\beta-\frac{m}{r}uh(\frac{1}{{c_2}}-\frac{1}{{c_1}})\big)
\big(-2h^2u-\frac{\gamma+1}{2}u^2h\big)-\frac{\gamma-1}{2}\frac{m}{r}u^3h.\nonumber
\end{align}
Here we used (\ref{hr})-(\ref{ur}) in the last step. Then regarding $(i)+(ii)$ as a polynomial of $\alpha,\beta$, one can easily derive (\ref{alphacd1})-(\ref{alphacd3}).

For the equation of $\beta$, we first calculate
\begin{align}
\pn\beta=&(u_r-\frac{2}{\gamma-1} h_r-\frac{m}{r}\frac{ h\,u}{{c_1}})_t+{c_1}(u_r-\frac{2}{\gamma-1} h_r-\frac{m}{r}\frac{ h\,u}{{c_1}})_r\nonumber\\
=&u_{rt}-\frac{2}{\gamma-1} h_{rt}+(u-h)u_{rr}-(u- h)\frac{2}{\gamma-1} h_{rr}\nonumber\\
&-\frac{m}{r}(\frac{hu}{{c_1}})_t-{c_1}(\frac{m}{r}\frac{hu}{{c_1}})_r.\label{Sprime2}
\end{align}
Putting (\ref{u_rt}) into (\ref{Sprime2}) arrives at
\begin{align}
\pn\beta=&\bigg(-\frac{m}{r}(\frac{hu}{{c_1}})_t+\frac{m}{r}\frac{hu}{{c_1}}{c_1}_r\bigg) +\bigg(\frac{\gamma+1}{\gamma-1}u_rh_r-u_r^2-\frac{2}{\gamma-1}h_r^2\bigg)\nonumber\\
\triangleq&(iii)+(iv). \nonumber
\end{align}
We utilize (\ref{hr}) and (\ref{ur}) again to gain
\begin{align}
&(iii)
=-(u_r-h_r)(u_r-\frac{2}{\gamma-1}h_r)\nonumber\\
=&-\big(\frac{3-\gamma}{4}\alpha+\frac{\gamma+1}{4}\beta+ \frac{m}{r}\frac{uh}{{c_2}{c_1}}(h+\frac{\gamma-1}{2}u)\big) \big(\beta+\frac{m}{r}\frac{uh}{{c_1}}\big)\nonumber\\
=&-\frac{\gamma+1}{4}\beta^2-\frac{3-\gamma}{4}\alpha\beta -(\frac{3-\gamma}{4}\alpha+\frac{\gamma+1}{4}\beta)\frac{m}{r}\frac{uh}{c_1}\nonumber\\
&-\frac{m}{r}\frac{uh(h+\frac{\gamma-1}{2}u)}{{c_1}{c_2}}\beta -\frac{m^2u^2h^2}{r^2c_1^2c_2}(h+\frac{\gamma-1}{2}u),
\nonumber
\end{align}
and
\begin{align}
(iv)
=&-\frac{m}{r}(\frac{hu}{{c_1}})_t+\frac{m}{r}\frac{uh}{{c_1}}{c_1}_r\nonumber\\
=&-\frac{1}{{c_1}}a(uh)_t+\frac{1}{c_1^2}{c_1}_t \frac{m}{r}uh+\frac{m}{r}\frac{uh}{{c_1}}{c_1}_r\nonumber\\
=&-\frac{m}{rc_1^2}\big({c_1}(uh)_t-uh{c_1}_t-uh{c_1}{c_1}_r\big),\nonumber
\end{align}
where
\begin{align}
&{c_1}(uh)_t-uh{c_1}_t-uh{c_1}{c_1}_r\nonumber\\
=&u^2h_t
 -h^2u_t-hu^2u_r+hu^2h_r+h^2uu_r-h^2uh_r\nonumber\\
=&h_r\big(\frac{2}{\gamma-1}h^3-u^3-h^2u+hu^2\big)+u_r\big(2h^2u-\frac{\gamma+1}{2}hu^2\big) -\frac{\gamma-1}{2}u^3ha\nonumber\\
=&\frac{\gamma-1}{4}\big(\alpha-\beta-\frac{m}{r}uh(\frac{1}{{c_1}}+\frac{1}{{c_2}})\big)
\big(-u^3+\frac{2}{\gamma-1}h^3-h^2u+hu^2\big)\nonumber\\
&+\frac{1}{2}\big(\alpha+\beta-\frac{m}{r}uh(\frac{1}{{c_2}}-\frac{1}{{c_1}})\big)
\big(2h^2u-\frac{\gamma+1}{2}hu^2\big)-\frac{\gamma-1}{2}\frac{m}{r}u^3h.\nonumber
\end{align}
By regarding $(iii)+(iv)$ as a polynomial of $\alpha,\beta$, we get \eqref{beta_eq}.
\end{proof}

For future use, we calculate $\partial_1h $ and $\partial_2h$:
\begin{align}
\partial_2h = \partial_th+(u+h)\partial_rh\nonumber\\
=h_t+uh_r+hh_r.
\end{align}
Therefore, by \eqref{h eq}, we know that
\begin{align}
\partial_2h=-\frac{\gamma-1}{2}\frac{m}{r}uh-\frac{\gamma-1}{2}hu_r+hh_r,
\end{align}
which yields
\begin{align}\label{p2h}
\partial_2h=-\frac{\gamma-1}{2}\frac{m}{rc_1}u^2h-\frac{\gamma-1}{2}h\beta.
\end{align}
Via a similar computation, we arrive at
\begin{align}\label{p1h}
\partial_1h = -\frac{\gamma-1}{2}\frac{m}{rc_2}u^2h-\frac{\gamma-1}{2}h\alpha.
\end{align}

\section{Invariant domains}\label{S4}
Before studying the solution in the whole domain $(r,t)\in\mathbb R^+\times \mathbb R^+$, we first consider the following two problems on the domain $\Omega$ in the $(r,t)$-plane.

\begin{definition} We define two problems for \eqref{Euler1}-\eqref{Euler3}.
\begin{itemize}
\item Problem 1: Cauchy problem on domain of dependence $\Omega$ on the $(r,t)$-plane with base $t=0$ and $(b,\infty)$ for any $b>0$, i.e. domain to the right of the 2-characteristic starting at  $(b,0)$.
\item Problem 2: Boundary value problem on the half Goursat problem on the domain $\Omega$ on the $(r,t)$-plane to the right of the 1-characteristic boundary $r=B_b(t)$ starting at the point $(b,0)$.
\end{itemize}
\end{definition}

Later, using results on these two problems, we will prove the global existence of some classical solution on the entire half line $r\in[0,\infty)$. The construction is divided into two cases: $\rho(0,t)=0$ and $\rho(0,t)>0$.
Specifically, to construct a smooth solution with positive density at the origin, we apply the affine solution to fill the domain between the line $r=0$ and the 1-characteristic curve $r=B_b(t)$.

\begin{figure}[h!]
\centering
\scalebox{0.45}{\includegraphics{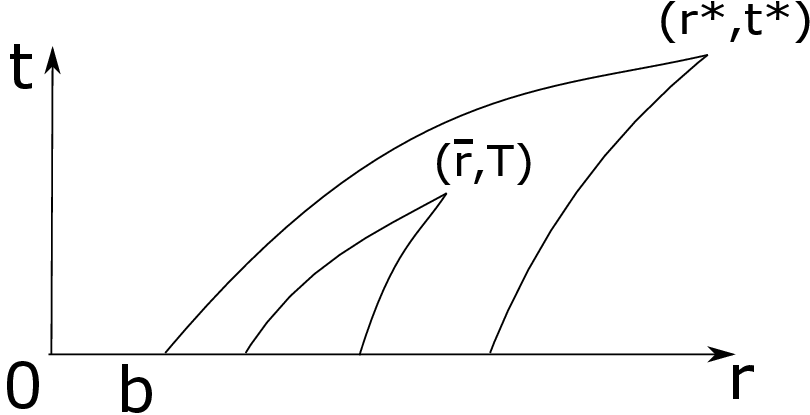}}
\scalebox{0.45}{\includegraphics{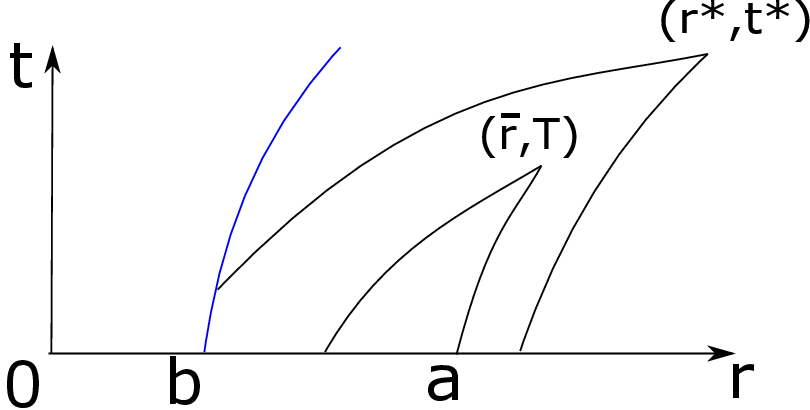}}
 \caption{Proof of Lemma \ref{lem1} and \ref{lem2}. \label{Fig:1} }
\normalsize
\end{figure}

We first review an invariant domain on $(u,\rho)$ or equivalently on $(u,h)$.

\begin{theorem}\label{thm_bounds}
For $1<\gamma\leq3$, when the initial data satisfy the initial Assumption \ref{asu_1}, any smooth solution of Problem 1 in the domain of dependence $\Omega$ based on $(b,\infty)$, with $b>0$,
satisfies
\begin{equation}\label{inbo1}
\frac{2\sqrt{K\gamma}}{\gamma-1}\rho^{\frac{\gamma-1}{2}}(r,t)\leq u(r,t)\leq 2C_0.
\end{equation}
If the initial Assumption \ref{asu_1} and the boundary Assumption \ref{asu_2} are satisfied, then inequality \eqref{inbo1} holds for any smooth solution of Problem 2 in the domain of dependence $\Omega$.
Moreover, when $b=0$, assume initially \eqref{inbo0} holds for any $r\in(0,\infty)$ and $
\rho_0(0)=u_0(0)=0,$
then the inequality \eqref{inbo1} holds on the domain $\{r>0,t\geq0\}$, with $\rho(0,t)=u(0,t)=0$.
\end{theorem}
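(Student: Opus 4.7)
The plan is to recast \eqref{inbo1} as an invariant region for the Riemann invariants $w,z$ and exploit the transport equations \eqref{w eq}--\eqref{z eq}. Since $h=\sqrt{K\gamma}\,\rho^{(\gamma-1)/2}\geq 0$, the inequality \eqref{inbo1} is equivalent to $z=u-\tfrac{2}{\gamma-1}h\geq 0$ together with $w=u+\tfrac{2}{\gamma-1}h\leq 2C_0$: the first is precisely the lower bound, and the second, combined with $h\geq 0$, gives $u\leq w\leq 2C_0$. Assumption \ref{asu_1} places the initial data inside the rectangle $R:=\{z\geq 0,\ w\leq 2C_0\}$, since $w_0\leq 2u_0\leq 2C_0$ and $z_0\geq 0$; Assumption \ref{asu_2} does the same for the boundary data of Problem 2. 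So it suffices to prove that $R$ is forward-invariant along the smooth flow.

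The invariance mechanism is immediate from \eqref{w eq}--\eqref{z eq}: $\partial_1 z=\tfrac{m}{r}uh$ and $\partial_2 w=-\tfrac{m}{r}uh$. Whenever the solution lies in $R$ we have $u\geq\tfrac{2}{\gamma-1}h\geq 0$ and $h\geq 0$, so $\partial_1 z\geq 0$ and $\partial_2 w\leq 0$; equivalently, $z$ is nondecreasing along every 1-characteristic and $w$ is nonincreasing along every 2-characteristic. I would close the argument by a standard bootstrap: let $T^*$ be the supremum of $T$ for which the smooth solution lies inside $R$ on $\Omega\cap\{t\leq T\}$. By continuity of the $C^1$ solution, the bounds persist at $t=T^*$; on $[0,T^*]$, for any $(r,t)\in\Omega$ I trace the 1- and 2-characteristics through $(r,t)$ backward in time. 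By ODE uniqueness and the construction of the domain of dependence, the terminal endpoints lie in $\{t=0,\ r>b\}$ for Problem 1, or in $\{t=0,\ r>b\}\cup\{r=B_b(t)\}$ for Problem 2, where Assumptions \ref{asu_1}--\ref{asu_2} give $z\geq 0$ and $w\leq 2C_0$. The monotonicity of $z$ and $w$ along the characteristic then transports these bounds to $(r,t)$, so $R$ cannot be left, forcing $T^*$ to equal the time-extent of $\Omega$.

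For the final claim with $b=0$, $\rho_0(0)=u_0(0)=0$, I would apply the above Problem 1 result to a sequence of bases $(b_n,\infty)$ with $b_n\downarrow 0$; since $C_0$ is uniform and the corresponding domains of dependence exhaust $\{r>0,\ t\geq 0\}$, the estimate \eqref{inbo1} holds throughout the open quadrant. The identity $u(0,t)=0$ is enforced by radial $C^1$ regularity of the velocity at $r=0$, and then letting $r\to 0^+$ in \eqref{inbo1} forces $\rho(0,t)=0$ as well. The step I expect to require the most care is the characteristic bookkeeping in Problem 2: one must verify that backward 1-characteristics from $\Omega$ never cross the boundary 1-curve $r=B_b(t)$ (immediate from uniqueness of $\dot r=c_1$), and that backward 2-characteristics reach either $\{t=0\}$ or the boundary in finite backward time. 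Both follow from the fact that on $R$, with $\gamma\leq 3$, we have $c_1=u-h\geq\tfrac{3-\gamma}{\gamma-1}h\geq 0$ and $c_2=u+h$ uniformly bounded, so the characteristic speeds remain under control throughout the argument.
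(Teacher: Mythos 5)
Your argument takes essentially the same route as the paper: inequality \eqref{inbo1} is read as the invariant rectangle $\{z\ge 0,\ w\le 2C_0\}$ in Riemann-invariant coordinates, preserved because $\partial_1 z=\tfrac{m}{r}uh\ge 0$ and $\partial_2 w=-\tfrac{m}{r}uh\le 0$ once the state sits in the rectangle. The paper cites Theorem~2.1 of \cite{CCW} for Problem~1 outright and, for Problem~2, runs a first-crossing-time contradiction along a backward 1-characteristic to get $z\ge 0$ on $\Omega_0$, then integrates $w$ along 2-characteristics back to the boundary; your bootstrap with $T^*$ is the same mechanism in a different guise. Both versions, incidentally, pass over the same degenerate boundary case: where $z$ first touches $0$ the quantity $\partial_1 z=\tfrac{2m}{(\gamma-1)r}h^2$ is only $\ge 0$, not strictly positive, so a little extra care is needed if one allows the crossing to occur at vacuum; this is routine but neither write-up spells it out.

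There is one genuine gap in your $b=0$ step. You assert that the domains of dependence for $b_n\downarrow 0$ exhaust $\{r>0,\ t\ge 0\}$, but this is not automatic. The right boundary of each $\Omega$ is the forward 2-characteristic from $(b_n,0)$, which moves to the right with speed $c_2=u+h\ge 0$; to exhaust the open quadrant you must show that these curves collapse to $r=0$ locally uniformly as $b_n\to 0$, equivalently that the 2-characteristic emanating from the origin is $r\equiv 0$. The paper explicitly invokes a result of \cite{CCW} for exactly this fact, and it does not follow from the uniform upper bound $c_2\le 2C_0$ (that only controls how far the curve moves to the right, not its limiting behavior near the origin). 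You should either cite this result or supply a comparison/Gronwall estimate for $c_2$ near $r=0$ exploiting $u(0,t)=h(0,t)=0$.
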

\begin{proof}
For Problem 1, one can directly use the proof of Theorem 2.1 in \cite{CCW} to obtain that the inequality \eqref{inbo1}  holds on the domain $\Omega$. We here give the proof of \eqref{inbo1} to Problem 2
for the sake of completeness.

Thanks to the result of Problem 1, it suffices to verify \eqref{inbo1} on the region $\Omega_0$ bounded by $B_b(t)$ and the 2-characteristic starting at point $(b,0)$. Recalling the definitions of $w$ and $z$, we only need to prove the following inequalities
\begin{equation}\label{inbo r}
z\geq0,\quad w\leq 2C_0,
\end{equation}
on $\Omega_0$. We shall show $z<w$ (i.e. $\rho>0$) for $1<\gamma<3$ at the end of this section.

Assume that $z(\bar{r},\bar{t})<0$ at some point $(\bar{r},\bar{t})\in\Omega_0$ with $\bar{t}>0$. Denote the 1-characteristic $r=r_1(t)$ through the point $(\bar{r},\bar{t})$ by $l_1$. Due to $z(r_1(0),0)\geq0$ by \eqref{inbo0}, there exist some times $0\leq\tilde{t}<\hat{t}\leq \bar{t}$ such that on $l_1$, there hold $z(r_1(t),t)\geq0$ for $t\in[0,\tilde{t})$, $z(r_1(\tilde{t}),\tilde{t})=0$, and $z(r_1(t),t)<0$ for $t\in(\tilde{t},\hat{t}]$. Thus one has $\partial_1z(r_1(\tilde{t}),\tilde{t})<0$. On the other hand, we recall \eqref{z eq} to find that
$$
\partial_1z(r_1(\tilde{t}),\tilde{t})=\frac{m}{r}u(r_1(\tilde{t}), \tilde{t})h(r_1(\tilde{t}),\tilde{t})\geq0,
$$
a contradiction. Hence $z(r,t)\geq0$ in $\Omega_0$.

According to $z(r,t)\geq0$, we have $u(r,t)\geq \frac{2}{\gamma-1}h(r,t)\geq0$ in $\Omega_0$.
For any point $(r,t)\in\Omega_0$, we draw the 2-characteristic $r=r_2(t)$ through $(r,t)$ up to the boundary $B_b(t)$ at $Q$. From \eqref{w eq}, one gets
$$
\partial_2w(r_2(t),t)=-\frac{m}{r}u(r_2(t), t)h(r_2(t),t)\leq0,
$$
which implies that $w(r_2(t),t)\leq w(Q)$. Thus we use the boundary condition \eqref{bbo0} to obtain $w(r_2(t),t)\leq 2C_0$.
\end{proof}


Once the inequality \eqref{inbo1} and $z<w$ hold, it is easy to check that, when $r>b\geq0$,
\begin{equation}\label{inbo2}
u-\frac{2}{\gamma-1}h\geq 0,\qquad c_1>0,\qquad c_2>0,
\end{equation}
so
\beq\label{A12}
A_1, A_2\geq 0,
\eeq
where $A_1$ and $A_2$ are defined in \eqref{A1Def} and \eqref{alphacd2}.

Using \eqref{A12}, we next prove that
the set $\{\min\{\alpha,\beta\}\geq 0\}$ is an invariant domain in Problem 1 and 2 in the following lemma.

\begin{lemma}\label{lem1}
Assume $1<\gamma<3$.
Consider smooth solution on $t\in[0,T_0]$ for Problem 1 or 2 on
$\Omega$, satisfying the Assumption \ref{asu_1} on $(b,\infty)$, with $b>0$. For Problem 2, we also assume Assumption \ref{asu_2} on the left boundary 1-characteristic $B_b(t)$.
One can show that  if
$$
\min_{[b,\infty)}(\alpha,\beta)(r,0)\geq0,
$$
and
$\min_{t\geq0}\alpha(B_b(t),t)\geq0$ for Problem 2,
then for smooth solution
$$
\min_{\Omega\cap\{t\leq T_0\}}(\alpha,\beta)(r,t)\geq0,
$$
i.e. $\{(\alpha,\beta)|\min(\alpha,\beta)\geq0\}$ is an invariant domain on time.
\end{lemma}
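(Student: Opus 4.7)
The plan is to combine the Riccati equations of Lemma \ref{lemma_ric} with sign information on the coefficients $A_1, A_2, B_1, B_2$, and then run a barrier/perturbation argument to rule out negative values of $\alpha$ or $\beta$.

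First I would verify that $A_1, A_2, B_1, B_2 \geq 0$ on $\Omega$. Theorem \ref{thm_bounds} and \eqref{inbo2} give $u \geq \tfrac{2}{\gamma-1}h$ and $c_1, c_2 > 0$; since $1 < \gamma \leq 3$ forces $\tfrac{2}{\gamma-1} \geq 1$, this also gives $u \geq h$ and $\tfrac{\gamma-1}{2}u^2 - h^2 \geq 0$, so $A_1, A_2 \geq 0$ follow directly from \eqref{A1Def} and \eqref{alphacd2}. For $B_1$, the regrouping $\tfrac{\gamma-1}{4}u^2(u-h) + \tfrac{1}{2}h^2(u-h) + \tfrac{hu c_1}{c_2}(h + \tfrac{\gamma-1}{2}u)$ shows $B_1 \geq 0$ term by term. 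The coefficient $B_2$ is more delicate because the last term in \eqref{alphacd3} is non-positive; setting $\lambda = u/h \geq \tfrac{2}{\gamma-1}$, positivity of $B_2$ reduces to
\[
f(\lambda) := \tfrac{\gamma-1}{2}\lambda^3 - \tfrac{3(\gamma-1)}{2}\lambda^2 + 3\lambda - 1 \geq 0,
\]
and a short computation yields $f\!\left(\tfrac{2}{\gamma-1}\right) = \tfrac{(3-\gamma)(\gamma+1)}{(\gamma-1)^2} > 0$, while the discriminant of $f'$ equals $9(\gamma-1)(\gamma-3) < 0$, so $f$ is strictly increasing and stays positive on $[\tfrac{2}{\gamma-1},\infty)$ exactly when $1 < \gamma < 3$.

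Next I would run a barrier argument exploiting the homogeneity of the right-hand sides of \eqref{beta_eq} and \eqref{alphacd1} observed after Lemma \ref{lemma_ric}. For $\eta > 0$ small and $K > 0$ to be chosen, set $\tilde\alpha = \alpha + \eta e^{Kt}$, $\tilde\beta = \beta + \eta e^{Kt}$; these are strictly positive at $t = 0$ on $[b,\infty)$ and, in Problem 2, $\tilde\alpha > 0$ on the left boundary $r = B_b(t)$. Suppose for contradiction there is a first time $t_* \in (0, T_0]$ at which $\min(\tilde\alpha, \tilde\beta)$ touches $0$ at an interior point $(r_*, t_*)$; WLOG $\tilde\beta(r_*, t_*) = 0$ with $\tilde\alpha(r_*, t_*) \geq 0$. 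Because two $1$-characteristics cannot cross in a smooth solution, and in Problem 1 the left $2$-characteristic boundary recedes faster to the left backward in time than any $1$-characteristic (since $c_2 > c_1 > 0$), the $1$-characteristic through $(r_*, t_*)$ remains in $\Omega$ back to $t = 0$; along it $\tilde\beta > 0$ for $t < t_*$, so $\partial_1 \tilde\beta(r_*, t_*) \leq 0$. Writing $E = \eta e^{Kt_*}$, plugging $\beta = -E$ and $\alpha \geq -E$ into \eqref{beta_eq} and using $A_1, B_1 \geq 0$ together with $\tfrac{3-\gamma}{4} > 0$ yields
\[
\partial_1 \beta \geq -\tfrac{1+\gamma}{4}E^2 - \tfrac{3-\gamma}{4}E^2 - A_1 E = -E^2 - A_1 E,
\]
so that $\partial_1 \tilde\beta \geq E(K - A_1 - E)$. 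Choosing $K$ larger than a uniform bound on $A_1$ and $\eta$ small enough that $\eta e^{KT_0} < 1$ makes this strictly positive, contradicting $\partial_1 \tilde\beta \leq 0$.

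The symmetric case $\tilde\alpha(r_*, t_*) = 0$ is handled identically along the $2$-characteristic via \eqref{alphacd1} and $B_2 \geq 0$; in Problem 2 the $2$-characteristic through $(r_*, t_*)$ may terminate on $r = B_b(t)$ before reaching $t = 0$, but $\tilde\alpha > 0$ there by Assumption \ref{asu_2} and the hypothesis that $\alpha(B_b(t),t) \geq 0$. Letting $\eta \to 0$ then yields $\alpha, \beta \geq 0$ on $\Omega \cap \{t \leq T_0\}$. The main obstacle is the algebraic verification that $B_2 \geq 0$ throughout the full admissible range $1 < \gamma < 3$; the rest is a standard barrier argument, with the only mild technical point being that in Problem 1 the domain $\Omega$ is unbounded in $r$, which is dealt with by restricting attention to the backward characteristic triangle of any fixed point $(r_0, T_0)$ where the coefficients are uniformly bounded.
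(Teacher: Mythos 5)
Your proof is correct and follows essentially the same route as the paper: an exponential barrier $\alpha + \eta e^{Kt}$, $\beta + \eta e^{Kt}$, the sign condition $A_1, A_2 \geq 0$, uniform coefficient bounds from Theorem~\ref{thm_bounds}, and a first-touching-time contradiction along $1$- and $2$-characteristics (the paper integrates the Riccati inequality while you evaluate the derivative at the touching point, which is equivalent). The only genuine difference is that you additionally verify $B_1, B_2 \geq 0$ (your cubic $f(\lambda)$ and its analysis are correct, and the paper in effect establishes $B_i \geq A_i \geq 0$ only in the proof of Lemma~\ref{lem2}), whereas the paper's proof of this lemma needs only the bound $|B_i| \leq \widehat K$ and absorbs any possible negative part of $B_i$ into the choice of $\widehat M$; so your extra algebra is harmless but not required here.
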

\begin{remark}
The idea of proof is base on the observation that $\{\min(\alpha,\beta)\geq0\}$ is an invariant domain on the $(\alpha, \beta)$-plane. In fact, when $\beta=0$ and $\alpha\geq 0$, $\partial_1\beta=A_1\alpha\geq 0$, and
when $\alpha=0$ and $\beta\geq 0$, $\partial_2\alpha=A_2\beta\geq 0$. So it is very easy to prove that, $\{\min(\alpha,\beta)>0\}$ is an invariant domain using the similar method as in \cite{CCW} or \cite{G9}. To obtain a more general version of invariant domain on $\{\min(\alpha,\beta)\geq 0\}$, one needs to introduce a small perturbation $\varepsilon e^{\widehat{M}t}$.
\end{remark}
\begin{proof}
According to Theorem \ref{thm_bounds} and the precise expressions of $A_i, B_i$, we know that there exists a positive constant $\widehat{K}$ such that
\begin{align}\label{dd15}
0\leq A_{1,2}(r,t)\leq \widehat{K},\quad |B_{1,2}(r,t)|\leq \widehat{K},\quad \forall\ (r,t)\in\Omega,
\end{align}
where $\widehat K$ depends on $b$. Here we note that
$$
\frac{h}{c_1}=\frac{h}{u-h}= \frac{h}{u-\frac{2}{\gamma-1}h +\frac{3-\gamma}{\gamma-1}h} \leq \frac{h}{\frac{3-\gamma}{\gamma-1}h}=\frac{\gamma-1}{3-\gamma},
$$
and
\begin{align}\label{dd1}
\frac{u}{c_1}=\frac{u}{u-h}=1+\frac{h}{c_1}\leq \frac{2}{3-\gamma},
\end{align}
which have positive upper bounds when $1<\gamma<3$.

Set $\widehat{M}=2\widehat{K}+2$ and $\varepsilon>0$ is an arbitrary small number such that $\varepsilon e^{\widehat{M}T_0}<1$. We now introduce two new variables for $t\leq T_0$
$$
X=\alpha+\varepsilon e^{\widehat{M}t},\quad Y=\beta+\varepsilon e^{\widehat{M}t}.
$$
In view of Lemma \ref{lemma_ric}, one can derive the governing system of $(X,Y)$
\begin{align}\label{X eq}
\pp X=&\bigg\{-\frac{\gamma+1}{4}(X-2\varepsilon e^{\widehat{M}t})-\frac{3-\gamma}{4}(Y-\varepsilon e^{\widehat{M}t})-B_2\bigg\}X \nonumber \\
&+\bigg\{\frac{3-\gamma}{4}\varepsilon e^{\widehat{M}t} +A_2\bigg\}Y +\varepsilon e^{\widehat{M}t}\bigg\{\widehat{M}-A_2-B_2-\varepsilon e^{\widehat{M}t}\bigg\},
\end{align}
and
\begin{align}\label{Y eq}
\pn Y=&\bigg\{-\frac{\gamma+1}{4}(Y-2\varepsilon e^{\widehat{M}t})-\frac{3-\gamma}{4}(X-\varepsilon e^{\widehat{M}t})-B_1\bigg\}Y \nonumber \\
&+\bigg\{\frac{3-\gamma}{4}\varepsilon e^{\widehat{M}t} +A_1\bigg\}X  +\varepsilon e^{\widehat{M}t}\bigg\{\widehat{M}-A_1-B_1-\varepsilon e^{\widehat{M}t}\bigg\}.
\end{align}
By the choice of $\widehat{M}$ and $\varepsilon$, it is observed that
\begin{align}\label{aa1}
\widehat{M}-A_i-B_i-\varepsilon e^{\widehat{M}t}\geq \widehat{M}-2\widehat{K}-\varepsilon e^{\widehat{M}T_0}>1.
\end{align}

We now apply the contradiction argument to show $\{(X,Y)|\min(X,Y)>0\}$ is an invariant domain for $t\leq T_0$. We first see by the initial or initial boundary value conditions that $X(r,0)>0, Y(r,0)>0$ ($r\in[b,\infty)$) and $X(B_b(t),t)>0, Y(B_b(t),t)>0$ ($t\geq0$) for Problem 2.
Suppose that the region $\{(X,Y)|\min(X,Y)>0\}$ is not an invariant domain; that is, there exists some time, such that $X(r^*,t^*)=0$ or $Y(r^*,t^*)=0$, at some point $(r^*,t^*)$ with $0<t^*\leq T_0$. Because the wave speed is bounded on $[0,t^*]$, then we can find the characteristic triangle with vertex $(r^*,t^*)$ and lower boundary on the initial line $t=0$, denoted by $\Pi_0\in \Omega$. If $(r^*,t^*)\in\Omega_0$ for Problem 2, then $\Pi_0$ is the characteristic quadrangle bounded by the 1- and 2-characteristics through $(r^*,t^*)$, lower boundary on the initial line $t=0$, and part of curve $B_b(t)$. See Figure \ref{Fig:1}.

In turn, we can find the first time $T\leq T_0$ such that $X(T)=0$ or $Y(T)=0$ in $\Pi_0$.

\paragraph{Case 1:} At time $T$, $Y = 0$, and $X \geq 0$.

In this case, we see by \eqref{Y eq} and \eqref{aa1} that there holds in the interval $[0, T)$
\begin{align}
\partial_1Y
> &\bigg\{-\frac{\gamma+1}{4}(Y-2\varepsilon e^{\widehat{M}t})-\frac{3-\gamma}{4}(X-\varepsilon e^{\widehat{M}t})-B_1\bigg\}Y,
\end{align}
which indicates that
$$
\frac{\partial_1Y}{Y} > -\frac{\gamma+1}{4}(Y-2\varepsilon e^{\widehat{M}t})-\frac{3-\gamma}{4}(X-\varepsilon e^{\widehat{M}t})-B_1.
$$
Integrating the above along the 1-characteristic $r=r_1(t)$ from $0$ to $s< T$ yields
\begin{align*}
&Y(s)=Y(r_1(s),s)> Y(r_1(0),0) \\
&\times\exp\bigg\{\int^s_0\bigg(-\frac{\gamma+1}{4}(Y-2\varepsilon e^{\widehat{M}\tau})-\frac{3-\gamma}{4}(X-\varepsilon e^{\widehat{M}\tau})-B_1\bigg) (r_1(\tau),\tau)d\tau \bigg\}.
\end{align*}
Thus implying that $s$ cannot be finite. Contradiction.

\paragraph{Case 2:} At time $T$, $X = 0$, and $Y \geq 0$.

In this case, we consider the interval $[0, T)$ and apply \eqref{X eq} and \eqref{aa1} again to obtain
\begin{align*}
\partial_2X&>\bigg\{-\frac{\gamma+1}{4}(X-2\varepsilon e^{\widehat{M}t})-\frac{3-\gamma}{4}(Y-\varepsilon e^{\widehat{M}t})-B_2\bigg\}X,
\end{align*}
and subsequently
\[  \frac{\partial_2X}{X} > -\frac{\gamma+1}{4}(X-2\varepsilon e^{\widehat{M}t})-\frac{3-\gamma}{4}(Y-\varepsilon e^{\widehat{M}t})-B_2.\]
One integrates the above along the 2-characteristic $r=r_2(t)$ from $s_0$ to $s< T$ to acquire
\begin{align*}
&X(s)=X(r_2(s), s)
> X(r_2(s_0), s_0) \\
&\times\exp\bigg\{\int^s_{s_0}\bigg(-\frac{\gamma+1}{4}(X-2\varepsilon e^{\widehat{M}\tau})-\frac{3-\gamma}{4}(Y-\varepsilon e^{\widehat{M}\tau})-B_2\bigg) (r_2(\tau),\tau)d\tau\bigg\},
\end{align*}
where $s_0=0$ if $(r_2(s), s)\in\Omega\setminus\Omega_0$, while $s_0$ is determined by $r_2(s_0)=B_b(s_0)$ if $(r_2(s), s)\in\Omega_0$. In view of the initial and boundary value conditions, we observe that
$s$ cannot be finite, which leads a contradiction.

Hence we have
$$
X=\alpha(r,t)+\varepsilon e^{\widehat{M}t}>0,\quad Y=\beta(r,t)+\varepsilon e^{\widehat{M}t}>0,
$$
for $t\leq T_0$ and any $\varepsilon$ satisfying $\varepsilon e^{\widehat{M}T_0}<1$. By the arbitrariness of $\varepsilon$, one obtains
$$
\alpha(r,t)\geq0,\quad \beta(r,t)\geq0.
$$
The proof of the lemma is complete.
\end{proof}

Next, we prove another invariant domain on the upper bounds of $\alpha$ and $\beta$, which show that the maximum rarefaction is bounded if it is initially bounded. A similar invariant domain was first established for the 1-d problem in \cite{G9,G10}, then extended to the radially symmetric solution in \cite{CCW}. Using the new coordinates, we get a much better estimate in the following lemma than the one in \cite{CCW}.

\begin{lemma}\label{lem2}
Assume $1<\gamma<3$. Consider smooth solution on $t\in[0,T_0]$ for Problem 1 or 2 on
$\Omega$, satisfying the Assumption \ref{asu_1} on $(b,\infty)$, with $b>0$.  For Problem 2, we also assume Assumption \ref{asu_2} on the left boundary 1-characteristic $B_b(t)$.
For any smooth solution in $t\in[0,T_0]$ satisfying the initial condition
\beq\label{lem2_2}
\min_{r\in[b,\infty)}(\alpha,\beta)(r,0)\geq0,\quad\hbox{and}\quad
\min_{t\geq0}\alpha(B_b(t),t)\geq0\quad \hbox{for Problem 2,}
\eeq
then for any $M>0$, if
$$
\max_{r\in[b,\infty)}(\alpha,\beta)(r,0)< M,\quad\hbox{and}\quad
\max_{t\geq0}\alpha(B_b(t),t)< M\quad \hbox{for Problem 2,}
$$
then
$$
\max_{\Omega\cap\{t\leq T_0\}}(\alpha,\beta)(r,t)< M.
$$
\end{lemma}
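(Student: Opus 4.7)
The plan is to mirror the characteristic contradiction argument from Lemma \ref{lem1}, but with the direction of the bound reversed. By Lemma \ref{lem1} applied to the same solution, I would first secure $\alpha, \beta \geq 0$ throughout $\Omega \cap \{t \leq T_0\}$, so both gradient variables are already controlled from below. Combined with Theorem \ref{thm_bounds} and the explicit forms of $A_i, B_i$ from Lemma \ref{lemma_ric}, this gives uniform bounds $0 \leq A_i \leq \widehat{K}$ and $|B_i| \leq \widehat{K}$ on $\Omega \cap \{t \leq T_0\}$, just as in the proof of Lemma \ref{lem1}.

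I would then set up auxiliary variables $X = (M-\alpha) + \varepsilon e^{\widehat{M}t}$ and $Y = (M-\beta) + \varepsilon e^{\widehat{M}t}$ for a small $\varepsilon > 0$ and a large constant $\widehat{M}$ to be chosen (with $\varepsilon e^{\widehat{M}T_0} < 1$), and aim to show $X, Y > 0$ on $\Omega \cap \{t \leq T_0\}$. Substituting $\alpha = M + \varepsilon e^{\widehat{M}t} - X$ and $\beta = M + \varepsilon e^{\widehat{M}t} - Y$ into \eqref{beta_eq} and \eqref{alphacd1} and collecting as polynomials in $X, Y$, I expect identities of the schematic form $\partial_1 Y = \{\cdots\}\,Y + \{\cdots\}\,X + \varepsilon e^{\widehat{M}t}\bigl(\widehat{M} + \{\cdots\}\bigr)$ and symmetrically for $\partial_2 X$, with all bracketed coefficients bounded in modulus by a constant depending only on $M$, $\widehat{K}$, and $\gamma$. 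The key structural input is that when $\beta$ is close to $M + \varepsilon e^{\widehat{M}t}$ (i.e.\ $Y$ is near zero), the quadratic damping $-\frac{1+\gamma}{4}\beta^2 - \frac{3-\gamma}{4}\alpha\beta$ in \eqref{beta_eq} has a definite sign, using $\alpha \geq 0$ from Lemma \ref{lem1} together with the hypothesis $1 < \gamma < 3$ to keep the cross term damping.

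The contradiction argument then runs as in Lemma \ref{lem1}: the strict hypotheses give $X, Y > 0$ on the parabolic boundary of $\Omega$, and if $(r^*, T)$ with $T \leq T_0$ were the first point where, say, $Y(r^*, T) = 0$ (the $X$ case being symmetric), then $X(r^*, T) \geq 0$ and the identity above yields a differential inequality of the form $\partial_1 Y / Y > -C$ along the $1$-characteristic through $(r^*, T)$ for $t < T$, for some constant $C = C(M, \widehat{K}, \gamma)$, provided $\widehat{M}$ is chosen large enough. Integrating from $t = 0$ (or from the intersection with $r = B_b(t)$ in Problem~2) along the characteristic triangle/quadrangle $\Pi_0$ of Figure \ref{Fig:1} forces $Y(r^*, T) > 0$, contradicting the assumed zero. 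Sending $\varepsilon \to 0^+$ yields $\alpha, \beta \leq M$, and the strict inequality is recovered from continuity together with the strict hypothesis on the initial/boundary data. The main obstacle is the calibration of $\widehat{M}$ against $M$ and $\widehat{K}$: the positive source $\varepsilon e^{\widehat{M}t}\widehat{M}$ must absorb the possibly destabilizing linear contributions $A_1 \alpha$ and $-B_1 \beta$ entering $\partial_1 Y$, exactly the same mechanism that powered Lemma \ref{lem1}, but now compensating in the opposite direction.
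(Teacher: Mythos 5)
Your proposal has a genuine gap. You set up $X=(M-\alpha)+\varepsilon e^{\widehat M t}$, $Y=(M-\beta)+\varepsilon e^{\widehat M t}$ and expect, after substitution, an identity of the schematic form $\partial_1 Y=\{\cdots\}Y+\{\cdots\}X+\varepsilon e^{\widehat M t}(\widehat M+\{\cdots\})$, so that the $O(\varepsilon)$ source $\varepsilon e^{\widehat M t}\widehat M$ absorbs the ``destabilizing'' linear contributions $A_1\alpha$ and $-B_1\beta$. That expectation is incorrect: writing $S=M+\varepsilon e^{\widehat M t}$ and plugging $\alpha=S-X$, $\beta=S-Y$ into \eqref{beta_eq}, the constant (i.e.\ $X$- and $Y$-free) term is $S^2+(B_1-A_1)S+\widehat M\varepsilon e^{\widehat M t}$, which is $O(1)$, not $O(\varepsilon)$ — it is the Riccati right-hand side evaluated at $\alpha=\beta=S$, with sign flipped. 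Moreover the coefficient of $X$ is $A_1-\frac{3-\gamma}{4}S$, which has no definite sign. In Lemma~\ref{lem1} the corresponding $X$-coefficient is $\frac{3-\gamma}{4}\varepsilon e^{\widehat M t}+A_1\geq 0$ and everything near the boundary $\{\alpha=0\}\cup\{\beta=0\}$ is $O(\varepsilon)$-small, which is why the perturbation trick works there. Near the boundary $\{\alpha=M\}\cup\{\beta=M\}$ the linear contributions $A_1\alpha-B_1\beta$ are $O(M\widehat K)$, so the $O(\varepsilon)$ source cannot control them; the bound $|B_i|\leq\widehat K$ alone gives a possibly negative source $S^2-|B_1-A_1|S$ for small $M$, and the comparison argument breaks.

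The missing ingredient is the algebraic rewrite $A_1\alpha-B_1\beta=A_1(\alpha-\beta)-(B_1-A_1)\beta$ together with the sign facts, which the paper proves by explicit computation from the formulas in Lemma~\ref{lemma_ric}:
\[
B_1-A_1=(3-\gamma)\frac{m}{rc_1^2}\frac{u^2h^2}{c_2}\geq 0,\qquad
B_2-A_2=(3-\gamma)\frac{m}{rc_2^2}\frac{u^2h^2}{c_1}\geq 0,
\]
valid for $1<\gamma<3$ with $c_1,c_2>0$. With these in hand, and with $\alpha,\beta\geq 0$ secured by Lemma~\ref{lem1}, the rewritten equation $\pn\beta=-\frac{1+\gamma}{4}\beta^2-\frac{3-\gamma}{4}\alpha\beta+A_1(\alpha-\beta)-(B_1-A_1)\beta$ shows that at the first point where $\beta=M$ with $0\leq\alpha\leq M$, every term is $\leq 0$ and the quadratic is strictly negative, hence $\partial_1\beta<0$ — an immediate contradiction with $\beta<M$ just before. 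No $\varepsilon$-regularization is needed here, because the hypothesis is strict ($\alpha_0,\beta_0<M$), unlike in Lemma~\ref{lem1} where the data may sit on the boundary $\alpha=0$ or $\beta=0$. Your proposal correctly reuses Lemma~\ref{lem1} to get $\alpha,\beta\geq 0$ and correctly notes the favorable quadratic, but without the $B_i-A_i\geq 0$ computation the argument does not close.
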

\begin{remark}
By Lemmas \ref{lem1}-\ref{lem2}, when $1<\gamma<3$, $\{(\alpha,\beta)|\min(\alpha,\beta)\geq0, \max(\alpha,\beta)< M\}$is a domain invariant on time.
\end{remark}
\begin{proof}


We first rewrite the equations of $\alpha$ and $\beta$ in Lemma \ref{lemma_ric}, as
\[
\pp\alpha=-\frac{\gamma+1}{4}\alpha^2-\frac{3-\gamma}{4}\alpha\beta+A_2(\beta-\alpha)-(B_2-A_2)\alpha,
\]
and
\[
\pn\beta=-\frac{1+\gamma}{4}\beta^2-\frac{3-\gamma}{4}\alpha\beta+A_1(\alpha-\beta)-(B_1-A_1)\beta,
\]
when $r>0$ and $c_1c_2\neq 0$.

We first show two important relations:
\beq\label{BA12}
B_1-A_1\geq0,\quad B_2-A_2\geq 0.
\eeq
Once we proved \eqref{BA12}, to explain the idea,
we see that on the right boundary of domain $\{\min(\alpha,\beta)\geq0, \max(\alpha,\beta)< M\}$ on the $(\alpha,\beta)$-plane, $0\leq\beta\leq\alpha=M$. So $\pp\alpha\leq 0$, since $A_2\geq0$. Similarly, we know $\pn\beta\leq 0$ on the upper boundary of the invariant domain. By Lemma \ref{lem1}, this domain is invariant on time.

Now we prove \eqref{BA12}. It suggests by \eqref{inbo1} that
\begin{align}\label{bb1}
h\leq\frac{\gamma-1}{2}u\leq(\gamma-1)C_0,
\end{align}
and
\begin{align}\label{bb2}
c_1=u-h\geq \frac{3-\gamma}{\gamma-1}h\geq0,\quad c_2=u+h\geq u,
\end{align}
on $r\geq b>0$. We rewrite the expressions of $A_1$ and $B_1$ in Lemma \ref{lemma_ric} as
\[
-A_1=\frac{m }{rc_1^2}\left(\frac{\gamma-1}{4}(-u^3-u^2h)+\frac{1}{2}(uh^2+h^3)\right),
\]
and
\[
B_1=\frac{m}{rc_1^2}\left\{\frac{\gamma-1}{4}(u^3-u^2h)+\frac{1}{2}(uh^2-h^3)+
\frac{hu{c_1}}{{c_2}}(h+\frac{\gamma-1}{2}u)\right\}.
\]
Hence one calculates
\begin{align*}
B_1-A_1
=&\frac{m}{rc_1^2}\left\{-\frac{\gamma-1}{2}u^2h+uh^2+
\frac{hu{c_1}}{{c_2}}(h+\frac{\gamma-1}{2}u)\right\}\\
=&\frac{m}{rc_1^2}\left\{-\frac{\gamma-1}{2}\frac{2u^2h^2}{c_2}+uh^2+
\frac{h^2u{c_1}}{{c_2}}\right\}\quad \hbox{(plug in $c_1=u-h$)}\\
=&(3-\gamma)\frac{m}{rc_1^2}\frac{u^2h^2}{c_2}
\geq0.
\end{align*}
Here we note by \eqref{bb2} that the term $\frac{u^2h^2}{c_1^2 c_2}$ is well-defined.

Similarly, the expressions of $A_2$ and $B_2$ in Lemma \ref{lemma_ric} can be rewritten as
\[
A_2=
\frac{m c_1}{2rc_2^2}(\frac{\gamma-1}{2}u^2-h^2)
=\frac{m}{rc_2^2}(\frac{\gamma-1}{4}u^3-\frac{\gamma-1}{4}u^2h-\frac{1}{2}uh^2+\frac{1}{2}h^3),
\]
and
\[
B_2=\frac{m}{rc_2^2}\big(\frac{\gamma-1}{4}u^3+\frac{1}{2}h^3+
\frac{\gamma-1}{4}u^2h+
\frac{1}{2}uh^2+\frac{hu{c_2}}{{c_1}}(h-\frac{\gamma-1}{2}u)\big),
\]
from which we have
\begin{align*}
B_2-A_2
=&\frac{m}{rc_2^2}\big(
\frac{\gamma-1}{2}u^2h+
uh^2+\frac{hu{c_2}}{{c_1}}(h-\frac{\gamma-1}{2}u)\big)\\
=&(3-\gamma)\frac{m}{rc_2^2}\frac{u^2h^2}{c_1}
\geq0.
\end{align*}

Then we prove $\max_{(r,t)\in\Omega}(\alpha,\beta)(r,t)< M$, for any time $t\in[0,T]$, by contradiction. As in the previous Lemma \ref{lem1}, we may assume that there exists
a characteristic triangle tip or a characteristic quadrangle tip at $(\bar r,T)$ such that $\alpha=M$ or $\beta=M$ at  $(\bar r,T)$, but  $0\leq\alpha<M$ and $0\leq\beta<M$ in the characteristic triangle or characteristic quadrangle.

Without loss of generality, we assume $0\leq\alpha<M$ and $\beta=M$ at $(\bar r,T)$.
It is observed that
\begin{align*}
    \partial_1\beta< -\frac{1+\gamma}{4}\beta^2-\frac{3-\gamma}{4}\alpha\beta<0,
\end{align*}
at the vertex $(\bar r,T)$, which violates our assumption that $\beta<M$ in the characteristic triangle or characteristic quadrangle. We can find a similar contradiction when $0\leq\beta<M$ and $\alpha=M$ at $(\bar r,T)$.
Hence, we prove the lemma.
\end{proof}

We now derive a time-dependent density positive lower bound. A density lower bound was provided in Theorems 4.1 and 4.2 in \cite{CCW} by studying the solution in the Lagrangian coordinates.
Here, we give a much better and cleaner lower bound on density using our new Riccati system directly in the Euler coordinates and Lemma \ref{lem2}.

\begin{lemma}\label{lem3}
Let the assumptions in Lemma \ref{lem2} hold.
Moreover, suppose that the initial data satisfy
\beq\label{aa3}
\min_{r\in[b,\infty)}\rho(r,0)>0,\quad\hbox{and}\quad
\min_{t\in[0,T_0]}\rho(B_b(t),t)>0\quad \hbox{for Problem 2.}
\eeq
Then, for $1<\gamma<3$, the smooth solution on $t\in[0,T_0]$ for Problem 1 and 2 satisfies
\begin{align}\label{aa4}
\rho(r,t)\geq \bigg(\frac{b}{b+2C_0t}\bigg)^m\bar{\rho} e^{-M_bt},
\end{align}
where $M_b$ is a positive constant depending on $b$, and
\begin{align*}
\bar{\rho}=
\left\{
\begin{array}{l}
\dspt \min_{r\in[b,\infty)}\rho(r,0) \quad {\rm for\ Problem\ 1}, \\
\dspt \min\{\min_{r\in[b,\infty)}\rho(r,0), \min_{t\in[0,T_0]}\rho(B_b(t),t)\} \quad {\rm for\ Problem\ 2}.
\end{array}
\right.
\end{align*}
\end{lemma}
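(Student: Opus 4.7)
The plan is to bound $\rho$ from below by tracking the conserved quantity $R := r^m\rho$ along particle paths $\dot r = u$, rather than along sound-speed characteristics; this way the radial geometry contributes only a clean dilation of $r$ instead of a singular source term. Expanding \eqref{Euler1} gives $(r^m\rho)_t + (r^m\rho u)_r = 0$, so along $\dot r(s) = u(r(s), s)$ one has $\frac{d}{ds}\ln(r^m\rho) = -u_r$. Using \eqref{ur} together with the identity $\frac{1}{c_2}-\frac{1}{c_1} = -\frac{2h}{c_1c_2}$,
\begin{equation*}
u_r \;=\; \tfrac{1}{2}(\alpha+\beta) + \frac{m u h^2}{r c_1 c_2}.
\end{equation*}

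The next step is a uniform upper bound on $u_r$ along any such particle path. Lemma \ref{lem2} supplies $0 \leq \alpha, \beta < M$; Theorem \ref{thm_bounds} and \eqref{inbo2} give $h \leq \frac{\gamma-1}{2} u \leq (\gamma-1)C_0$, $c_2 \geq u$, and crucially (since $\gamma<3$) $c_1 \geq \frac{3-\gamma}{\gamma-1} h$, which together yield $\frac{u h^2}{c_1 c_2} \leq \frac{(\gamma-1)^2 C_0}{3-\gamma}$. Since $u \geq 0$, each forward particle path has $r(s)$ nondecreasing, and any starting point in $\Omega$ satisfies $r_0 \geq b$ (either by the initial assumption, or because $B_b$ is a $1$-characteristic with $\dot B_b = c_1 \geq 0$ launched from $r=b$ in Problem~2). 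Therefore $r(s) \geq b$ along the entire path, and
\begin{equation*}
u_r \;\leq\; M + \frac{m(\gamma-1)^2 C_0}{(3-\gamma)\, b} \;=:\; M_b.
\end{equation*}

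To conclude, fix $(r,t) \in \Omega$ and trace its particle path backward until it first meets the parabolic boundary of $\Omega$, at a point $(r_0, t_0)$ that lies either on $\{t_0=0\}$ with $r_0\geq b$, or on $r=B_b(s)$ with $r_0 = B_b(t_0) \geq b$. In either case $\rho(r_0,t_0) \geq \bar\rho$ by hypothesis \eqref{aa3}, so integrating $\frac{d}{ds}\ln(r^m\rho) \geq -M_b$ from $t_0$ to $t$ gives $r^m\rho(r,t) \geq r_0^m \bar\rho\, e^{-M_b t}$. Finally, $\dot r = u \leq 2C_0$ yields $r \leq r_0 + 2C_0 t$, and the monotonicity of $x\mapsto x/(x+c)$ for $c>0$ gives $r_0/(r_0 + 2C_0 t) \geq b/(b+2C_0 t)$; combining these factors produces \eqref{aa4}. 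The main subtlety is obtaining the geometric prefactor $(b/(b+2C_0 t))^m$ rather than a cruder exponential: estimating $\partial_i\ln\rho$ directly along a sound-speed characteristic produces a residual $\frac{m u^2}{r c_i}$ that destroys the conservation structure, whereas the particle-path approach absorbs it into the time-growth of $r$, with the leftover curvature term $\frac{m u h^2}{r c_1 c_2}$ being tame only because $\gamma<3$ together with the rarefaction bound $u\geq\frac{2}{\gamma-1}h$ prevents $c_1$ from degenerating.
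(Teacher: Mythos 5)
Your proof is correct and follows essentially the same route as the paper: track $r^m\rho$ along the particle path $\dot r = u$, express the resulting derivative of $\ln(r^m\rho)$ in terms of $\alpha,\beta$ and the curvature remainder, bound that remainder using $r\geq b$, $h\leq\tfrac{\gamma-1}{2}u$, $c_2\geq u$, $c_1\geq\tfrac{3-\gamma}{\gamma-1}h$, and extract the geometric prefactor $\bigl(b/(b+2C_0t)\bigr)^m$ from $\dot r\leq 2C_0$. The only (minor, harmless) difference is that you obtain $\partial_0\ln(r^m\rho)=-u_r$ directly from the conservation form of \eqref{Euler1} rather than via $\partial_0 h$ as the paper does, and your explicit constant $M_b$ differs slightly from the paper's, which is immaterial since the lemma only asserts existence of some $b$-dependent constant.
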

\begin{proof}
We set $\partial_0=\partial_t+u\partial_r$ and apply the definitions of $(\partial_1, \partial_2)$ to obtain
\beq\label{aa5}
\partial_0=\frac{\partial_1+\partial_2}{2},
\eeq
which together with \eqref{p1h} and \eqref{p2h} gives
\beq\label{aa6}
\partial_0h=\frac{\partial_1h+\partial_2h}{2}=-\frac{\gamma-1}{4}h\bigg(\alpha+\beta +\frac{mu^2}{rc_1} +\frac{mu^2}{rc_2}\bigg).
\eeq
By the relationship between $\rho$ and $h$,  we calculate
\begin{align}
\partial_0\bigg(\frac{1}{r^m\rho}\bigg)=&\frac{1}{r^m\rho}\bigg(-\frac{2}{\gamma-1}\frac{1}{h}\partial_0h -\frac{m}{r}\partial_0r\bigg) \nonumber \\
=&\frac{1}{r^m\rho}\bigg(\frac{\alpha+\beta}{2} +\frac{mu^2}{2rc_1} +\frac{mu^2}{2rc_2} -\frac{mu}{r}\bigg) \nonumber \\
=&\frac{1}{r^m\rho}\bigg(\frac{\alpha+\beta}{2} +\frac{mh^2}{2rc_1} +\frac{mh^2}{2rc_2} \bigg),
\end{align}
from which one has
\begin{align}\label{aa7}
\partial_0\ln\bigg(\frac{1}{r^m\rho}\bigg)=\frac{\alpha+\beta}{2} +\frac{mh^2}{2rc_1} +\frac{mh^2}{2rc_2}.
\end{align}
Recalling \eqref{bb1} and \eqref{bb2}, we have
\begin{align}\label{aa8}
\frac{mh^2}{2rc_1}, \frac{mh^2}{2rc_2}\leq \frac{m(\gamma-1)}{2b(3-\gamma)}h\leq  \frac{m(\gamma-1)}{b(3-\gamma)}C_0.
\end{align}
Here we used the fact $r\geq b$ for any point $(r,t)$ in $\Omega$.

Let $r=r_0(t)=r_0(t;\xi,\eta)$ be the curve defined by
\begin{align}
\frac{d r_0(t)}{d t}=u(r_0(t),t),\quad r_0(\xi)=\eta,
\end{align}
for any point $(\xi,\eta)\in\Omega$ and $t\leq \xi$. We denote the intersection point of $r=r_0(t)$ and the initial line $t=0$ or boundary curve $B_b(t)$ by $(r_0(t_0), t_0)$. Then
\begin{align}
r_0(t_0)\leq r_0(t)\leq r_0(t_0) +\int_{t_0}^tu(r_0(\tau),\tau)\ d\tau \leq r_0(t_0) +2C_0t,
\end{align}
which indicates that
\begin{align}\label{aa9}
\frac{r_0(t_0)}{r_0(t)}\geq \bigg(1+\frac{2C_0t}{r_0(t_0)}\bigg)^{-1} \geq \frac{b}{b+2C_0t}.
\end{align}
Integrating \eqref{aa7} along $r_0(t)$ and applying Lemma \ref{lem2}, \eqref{aa8} and \eqref{aa9}, we conclude that
\begin{align}\label{aa10}
\rho(r,t)\geq \bigg(\frac{r_0(t_0)}{r}\bigg)^m\rho_0 e^{-M_bt} \geq  \bar{\rho}\bigg(\frac{b}{b+2C_0t}\bigg)^m e^{-M_bt},
\end{align}
where $\rho_0=\rho(r_0(t_0),t_0)$, and
\begin{align}\label{aa10a}
M_b=M+\frac{2m(\gamma-1)}{b(3-\gamma)}C_0.
\end{align}
The proof of the lemma is finished.
\end{proof}

\section{Global existence result on the entire half line $r\in[0,\infty)$}\label{S5}

The first goal of this section is to show that the existence of $C^1$ solution in $\Omega$ in Problem 1 and 2, when the initial data include only rarefaction, i.e. $\min(\alpha,\beta)(r,0)\geq0$ when $r>b>0$. For Problem 2, we also assume that there is only rarefaction on the boundary $r=B_b(t)$, i.e. $\min\alpha(B_b(t),t)\geq0$ for $t\geq0$.

Then, letting $b\rightarrow 0$, we can construct the global existence result on $r\geq0, t\geq 0$, with $\rho(0,t)=u(0,t)=0$ at the origin.

We will also construct some global classical solutions with positive density, by adding some special initial data on $r\in[0,b]$.

To prove these results, we need to use the $L^\infty$ bound in \eqref{inbo1}, the density lower bound in \eqref{aa4} and $C^1$ bound for $w$ and $z$.

\subsection{Global existence for Problem 1 and 2}

For Problem 1, we have the following existence theorem
\begin{theorem}\label{ex 1}
Let the initial data $(\rho_0(r), u_0(r))\in C^1([b,\infty))$ satisfy Assumption \ref{asu_1} and $\min_{r\in[b,\infty)}\rho_0(r)>0$. Suppose that
\begin{align}\label{aa13}
\min_{r\in[b,\infty)}\{\alpha_0(r), \beta_0(r)\}\geq0,\quad  \max_{r\in[b,\infty)}\{\alpha_0(r), \beta_0(r)\}<M,
\end{align}
with
\begin{align}\label{aa14}
\begin{array}{l}
\dspt \alpha_0(r)=u_{0}'(r) +\frac{2}{\gamma-1}h_{0}'(r) +\frac{m}{r}\frac{h_0(r)u_0(r)}{u_0(r)+h_0(r)}, \\[8pt]
\dspt \beta_0(r)=u_{0}'(r) -\frac{2}{\gamma-1}h_{0}'(r) -\frac{m}{r}\frac{h_0(r)u_0(r)}{u_0(r)-h_0(r)}.
\end{array}
\end{align}
Here $M$ is a positive constant and $h_0(r)=\sqrt{K\gamma}\rho_0(r)^{(\gamma-1)/2}$. Then, for $1<\gamma<3$, Problem 1 admits a global $C^1$ solution on domain of dependence $\Omega$ on the $(r,t)$-plane with base $t=0$ and $(b,\infty)$ for any $b>0$. Moreover, the solution $(\rho,u)(r,t)$ satisfies
\begin{align}\label{aa15}
\frac{2\sqrt{K\gamma}}{\gamma-1}\rho^{\frac{\gamma-1}{2}}(r,t)\leq u(r,t)\leq 2C_0,\
\rho(r,t)\geq \bigg(\frac{b}{b+2C_0t}\bigg)^m\bar{\rho} e^{-M_bt},
\end{align}
for some positive constant $M_b$, and
\begin{align}\label{aa16}
\min_{\Omega}(\alpha,\beta)(r,t)\geq0,\quad \max_{\Omega}(\alpha,\beta)(r,t)<M.
\end{align}
Here $\bar{\rho}=\min_{r\in[b,\infty)}\rho_0(r)$.
\end{theorem}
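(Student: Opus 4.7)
The proof is a standard continuation argument for smooth solutions of the quasilinear hyperbolic system \eqref{h eq}--\eqref{u eq}. Because the initial data lie in $C^1([b,\infty))$ with $\rho_0(r)>0$, and because $r\geq b>0$ throughout the domain of dependence $\Omega$, the geometric source $\tfrac{m}{r}uh$ is smooth on any region where $\rho>0$, so classical local existence theory produces a unique $C^1$ solution on some maximal time interval $[0,T^*)$ over $\Omega$. The goal is to rule out $T^*<\infty$ by showing that on each slab $\Omega\cap\{t\leq T_0\}$ with $T_0<T^*$ the $C^1$ norm of $(u,\rho)$ is uniformly bounded and $\rho$ stays strictly positive, so that the standard continuation principle extends the solution past $T_0$; since $T_0$ is arbitrary, this forces $T^*=\infty$.

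I would then assemble the a priori bounds from the preceding lemmas. Theorem \ref{thm_bounds} yields \eqref{inbo1}, giving $\tfrac{2}{\gamma-1}h\leq u\leq 2C_0$, so in particular $h\leq (\gamma-1)C_0$ and $c_1,c_2>0$ wherever $h>0$. Lemmas \ref{lem1} and \ref{lem2} propagate the initial constraints on the R/C characters, producing the bounds $0\leq\alpha,\beta<M$ in \eqref{aa16} on $\Omega\cap\{t\leq T_0\}$. Lemma \ref{lem3} then supplies the positive, time-dependent density lower bound in \eqref{aa15}, which keeps the system strictly hyperbolic on $[0,T_0]$ and rules out vacuum formation there.

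The crux of the argument is to convert these bounds into uniform $C^1$ control on $(u,\rho)$ through the identities \eqref{hr}--\eqref{ur}. The only potentially singular contributions are the geometric source terms $\tfrac{m}{r}\tfrac{uh}{c_1}$ and $\tfrac{m}{r}\tfrac{uh}{c_2}$, and the main obstacle is that $c_1=u-h$ could a priori degenerate as $h$ becomes small. The key observation is that the supersonic bound $u\geq\tfrac{2}{\gamma-1}h$ from \eqref{inbo1} forces $c_1\geq\tfrac{3-\gamma}{\gamma-1}h$, so that
\[
\frac{uh}{c_1}\,\leq\,\frac{\gamma-1}{3-\gamma}\,u\,\leq\,\frac{2(\gamma-1)}{3-\gamma}C_0,\qquad \frac{uh}{c_2}\,\leq\, h\,\leq\,(\gamma-1)C_0,
\]
uniformly on $\Omega$ and independently of how small $h$ becomes. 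Combined with $r\geq b$ and the $(\alpha,\beta)$ bounds, the representations \eqref{hr}--\eqref{ur} then deliver uniform bounds on $|u_r|$ and $|h_r|$ on $\Omega\cap\{t\leq T_0\}$, hence on $|w_r|$ and $|z_r|$. The continuation principle yields $T^*=\infty$, and the bounds \eqref{aa15}--\eqref{aa16} follow immediately from the invariant domains used above.
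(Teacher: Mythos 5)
Your proposal is correct and follows essentially the same route as the paper's proof: both rest entirely on the a priori estimates of Section 4 (Theorem \ref{thm_bounds}, Lemmas \ref{lem1}--\ref{lem3}) converted into uniform $C^1$ control via \eqref{hr}--\eqref{ur} together with $r\geq b$ and the supersonic bound $c_1\geq\frac{3-\gamma}{\gamma-1}h$, and both then extend the local solution globally. The only cosmetic difference is that you phrase the extension as a maximal-interval continuation argument, whereas the paper iterates the Li--Yu local existence theorem with a uniform time step controlled by the norm of the function set $\Gamma^*$; these are equivalent in substance.
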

\begin{proof}
The proof of the theorem is based on the classical framework of Li \cite{LiBook} by extending the local smooth solution to global domain. The local existence of smooth solution to Problem 1 follows from the classical results, see, e.g. Li and Yu \cite{Li-Yu} or Bressan \cite{Bressan}. In order to extend the local solution to a global one, it suffices to establish the a priori $C^1$ estimates of the solution on the domain $\Omega$. Actually, by examining the proof process of local classical solution in \cite{Li-Yu}, the
existence time $\delta$ of the smooth solution depends only on the norm $\|\Gamma^*\|$ and the $C^1$ norm $\|(w_0(r), z_0(r))\|_{C^1([b,\infty))}$, where $\Gamma^*$ is the following set of functions
\begin{align}\label{aa17}
\Gamma^*=\bigg\{&c_1, c_2, \frac{1}{c_2-c_1}, \frac{\partial c_1}{\partial w}, \frac{\partial c_1}{\partial z}, \frac{\partial c_2}{\partial w}, \frac{\partial c_2}{\partial z},  \nonumber \\
&\frac{uh}{r}, \partial_r\bigg(\frac{uh}{r}\bigg),  \partial_w\bigg(\frac{uh}{r}\bigg), \partial_z\bigg(\frac{uh}{r}\bigg)\bigg\},
\end{align}
and $w_0(r)=u_0(r)+\frac{2}{\gamma-1}h_0(r)$, $z_0(r)=u_0(r)-\frac{2}{\gamma-1}h_0(r)$. See Remark 4.1. in Chapter 1 in \cite{Li-Yu}. From the above and the a priori estimates in Section 4, we know that, for any number $b>0$ and any time $T>0$, the existence time $\delta$ depends only on $b$ and $T$, which means that, for the fixed $b$ and $T$, the local existence time $\delta$ is a constant. Hence we can solve a finite number of local existence problems to extend the solution in the region $\Omega\cap\{0\leq t\leq \delta\}$ to the global region $\Omega\cap\{0\leq t\leq T\}$. By the arbitrariness of $T$, we can obtain the smooth solution on the global region $\Omega$ for any fixed $b>0$. The properties of solution in \eqref{aa15} and \eqref{aa16} can be acquired directly by the results in Section 4.
\end{proof}

For Problem 2, we have
\begin{theorem}\label{ex 2}
Let the initial conditions of $(\rho_0(r),u_0(r))$ in Theorem \ref{ex 1} hold. Let $r=B_b(t) (t\in[0,\infty))$ be a smooth increasing curve. Assume that the boundary value $(\rho_b(t), u_b(t))\in C^1([0,\infty))$ on $B_b(t)$ satisfies Assumption \eqref{asu_2} and
\begin{align}\label{aa18}
\left\{
\begin{array}{l}
\frac{d B_b(t)}{d t}=u_b(t)-h_b(t), \\
B_b(0)=b>0,
\end{array}
\right.
\end{align}
where $h_b(t)=\sqrt{K\gamma}\rho_b(t)^{\frac{\gamma-1}{2}}$ with $\min_{t\in[0,T]}\rho_b(t)>0$ for any $T>0$.
Moreover, the function $\alpha_b(t)$ satisfies
\begin{align}\label{aa19}
\min_{t\geq0}\alpha_b(t)\geq0,\quad  \max_{t\geq0}\alpha_b(t)<M,
\end{align}
where
\begin{align}\label{aa20}
\alpha_b(t)=-\frac{1}{B_{b}^m(t)\rho_b(t)(u_b(t)-h_b(t))}\frac{d(B_{b}^m(t)\rho_b(t)u_b(t))}{d t},
\end{align}
and $M$ is a positive constant. In addition, suppose that the following compatibility conditions hold
\begin{align}\label{aa21}
\begin{array}{c}
\dspt (\rho_0(b),u_0(b))=(\rho_b(0),u_b(0)),\ u_{0}'(b)-\frac{2}{\gamma-1}h_{0}'(b)=\frac{mu_b(0)h_b(0)}{b}, \\[8pt]
\dspt \frac{d}{d t}\bigg(u_b(t)-\frac{2}{\gamma-1}h_b(t)\bigg) =\frac{mu_b(t)h_b(t)}{B_b(t)}.
\end{array}
\end{align}
Then, for $1<\gamma<3$, Problem 2 admits a global $C^1$ solution on the domain $\Omega$ bounded by the initial line $t=0$ and the curve $r=B_b(t)$ for any $b>0$. Moreover, the solution $(\rho,u)(r,t)$ satisfies
\begin{align}\label{aa22}
\frac{2\sqrt{K\gamma}}{\gamma-1}\rho^{\frac{\gamma-1}{2}}(r,t)\leq u(r,t)\leq 2C_0,\
\rho(r,t)\geq \bigg(\frac{b}{b+2C_0t}\bigg)^m\bar{\rho} e^{-M_bt},
\end{align}
for some positive constant $M_b$, and
\begin{align}\label{aa23}
\min_{\Omega}(\alpha,\beta)(r,t)\geq0,\quad \max_{\Omega}(\alpha,\beta)(r,t)<M.
\end{align}
Here $\bar{\rho}=\min\{\min_{r\in[b,\infty)}\rho_0(r), \min_{t\geq0}\rho_b(t)\}$.
\end{theorem}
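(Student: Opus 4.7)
The plan is to mirror the proof of Theorem~\ref{ex 1}, with extra care devoted to the left boundary $r=B_b(t)$. Since $B_b(t)$ is itself a $1$-characteristic, the $2$-Riemann invariant $w$ propagates into $\Omega$ from $B_b$, while the $1$-Riemann invariant $z$ is transported along $B_b$ by \eqref{z eq}; that transport is precisely the content of the second line of \eqref{aa21}. Together with the $C^0$/$C^1$ matching at the corner $(b,0)$ furnished by the first line of \eqref{aa21}, this places the data in the standard setting for the local existence theorem for mixed initial-boundary value problems of $2\times 2$ quasilinear hyperbolic systems in Riemann invariant form (cf.\ \cite{Li-Yu}, \cite{Bressan}), producing a local $C^1$ solution on $\Omega\cap\{0\le t\le\delta\}$ with $\delta$ depending only on $b$, on $\|\Gamma^*\|$ (see \eqref{aa17}), on $\|(w_0,z_0)\|_{C^1([b,\infty))}$ and on $\|(w_b,z_b)\|_{C^1([0,\infty))}$.

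Next I would collect uniform a priori estimates on any strip $\Omega\cap\{0\le t\le T\}$. Theorem~\ref{thm_bounds} (Problem~2 case) supplies the $L^\infty$ bounds \eqref{aa22} on $(\rho,u)$, which in particular ensure $c_1,c_2>0$ and that $A_i\ge 0$. Before invoking the invariant-domain lemmas I would verify that the function $\alpha_b(t)$ in \eqref{aa20} coincides with the trace of $\alpha$ on $B_b(t)$: since $\partial_1|_{B_b}=\tfrac{d}{dt}$, one has $\partial_1(r^m\rho u)|_{B_b}=\tfrac{d}{dt}(B_b^m\rho_bu_b)$, consistent with \eqref{def_RC}; note that no analogous prescription on $\beta|_{B_b}$ is required because $2$-characteristics do not enter $\Omega$ through $B_b$. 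Lemmas~\ref{lem1} and~\ref{lem2} then give $0\le\alpha,\beta<M$ throughout $\Omega$, and Lemma~\ref{lem3}, applicable since $\bar\rho>0$, produces the time-dependent positive lower bound on $\rho$ in \eqref{aa22}.

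Combining these $(\alpha,\beta)$ estimates with the identities \eqref{hr}--\eqref{ur} and the uniform positivity of $r\ge b$ and $\rho$, I obtain uniform control of $u_r,h_r$ and hence of $w_r,z_r$ throughout $\Omega\cap\{0\le t\le T\}$, together with uniform bounds on $\Gamma^*$. The local existence time $\delta$ of the first step therefore reduces to a constant depending only on $b$ and $T$, so finitely many applications of the local existence theorem extend the solution to $\Omega\cap\{0\le t\le T\}$; letting $T\to\infty$ yields the global $C^1$ solution on $\Omega$, and \eqref{aa22}--\eqref{aa23} are inherited from the a priori estimates. The main obstacle is the local well-posedness step: one has to check that \eqref{aa21} furnishes precisely the $C^1$ compatibility conditions demanded by the mixed initial-boundary theory when one side of the data region lies on a characteristic curve, and that prescribing both $\rho_b$ and $u_b$ is consistent in the sense that the resulting boundary trace of $z$ is forced to satisfy \eqref{z eq}; once this is settled, the remainder is a routine combination of the estimates in Sections~\ref{S3}--\ref{S4} with the continuation argument already used for Theorem~\ref{ex 1}.
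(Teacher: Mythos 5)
Your proposal is correct and follows essentially the same route as the paper: invoke the Li--Yu local existence theory for a characteristic boundary-value problem, verify that the existence time $\delta$ depends only on $\|\Gamma^*\|$ and the $C^1$ boundary and initial data, pull these quantities under control uniformly on $\Omega\cap\{0\le t\le T\}$ via Theorem~\ref{thm_bounds} and Lemmas~\ref{lem1}--\ref{lem3}, and iterate finitely many local steps. The one presentational difference is that the paper explicitly decomposes $\Omega$ into the domain of dependence of $(b,\infty)$, already handled by Theorem~\ref{ex 1}, and the Goursat triangle $\Omega_0$ between the $1$-characteristic $B_b(t)$ and the $2$-characteristic $C_b(t)$ issuing from $(b,0)$, with the trace of the Problem-1 solution on $C_b(t)$ serving as data for the Goursat problem; you treat the full region $\Omega$ in one go as a mixed IBVP, but since the boundary $B_b$ is a $1$-characteristic the local solvability is in both cases the Goursat case of \cite{Li-Yu}, so the arguments coincide. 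Your observations that the second line of \eqref{aa21} is just \eqref{z eq} restricted to $B_b(t)$ (hence the data for $z$ on $B_b$ is consistent rather than overdetermined), and that $\alpha_b$ in \eqref{aa20} is meant to be the trace of $\alpha$ on $B_b$ because $\partial_1|_{B_b}=\tfrac{d}{dt}$, correctly make explicit two points the paper leaves implicit.
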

\begin{proof}
Based on Theorem \ref{ex 1}, it suffices to consider the global existence of the Goursat type boundary value problem on the domain $\Omega_0$ which is bounded by the 1-characteristic $r=B_b(t)$ and the 2-characteristic $r=C_b(t)$ starting from point $(b,0)$. Here the curve $r=C_b(t)$ is determined in solving Problem 1. Moreover, the boundary data on $r=C_b(t)$ satisfies \eqref{aa15} and \eqref{aa16}. For this typical Goursat problem, we can also utilize the classical framework of Li \cite{LiBook} to extend the local smooth solution to global domain. The local existence time $\delta$ of the smooth solution in Li and Yu \cite{Li-Yu}
depends only on the norm $\|\Gamma^*\|$ and the $C^1$ norms of $(w,z)$ on the boundaries $r=B_b(t)$ and $r=C_b(t)$. Thanks to the a priori estimates in Section 4, the existence time $\delta$ still depends only on the number $b>0$ and the time $T>0$. By solving a finite number of local Goursat and Cauchy problems, we can acquire the global existence of smooth solution on the domain $\Omega_0\cap\{0\leq t\leq T\}$.
Due to the arbitrariness of $T$, we obtain the smooth solution over the entire region $\Omega_0$, which completes the proof of the theorem.
\end{proof}

\subsection{Global existence on the entire half line $r\in[0,\infty)$}

Let's start to construct the global solution on the entire half line $r\in[0,\infty)$.
We further assume that the boundary condition $u(0,t)=0$ at the origin, which is a reasonable
physical assumption.

For the data of the density at the origin, a direct and simple choice is
$\rho(0,t)=0$, which makes that the initial assumption \eqref{inbo0} is satisfied on the entire half line $r\in[0,\infty)$. For this choice, one can apply the results in Theorem \ref{ex 1} to construct the global smooth solution. Suppose that the initial conditions in Theorem \ref{ex 1} hold for any $b>0$. Moreover, we assume that, when $r$ approaches 0, the limits of $\alpha_0(r)$ and $\beta_0(r)$ exist and are nonnegative.
For any point $(r,t)$ with $r>0$, we let $b$ small enough such that $(r,t)$ is in the corresponding domain $\Omega$. Here we used the result in \cite{CCW} that the characteristic starting from $(r,t)=(0,0)$
must be $r=0$. Hence we obtain the smooth solution on the entire domain $r\geq0, t\geq0$.

Thus we have
\begin{theorem}\label{ex 3_0}
Let the initial data $(\rho_0(r), u_0(r))\in C^1([0,\infty))$ satisfy Assumption \ref{asu_1} and $\rho_0(0)=0$, $u_0(0)=0$, $\rho_0(r)>0$ for $r>0$. Suppose that
\begin{align}\label{cc1}
\min_{r\in[0,\infty)}\{\alpha_0(r), \beta_0(r)\}\geq0,\quad  \max_{r\in[0,\infty)}\{\alpha_0(r), \beta_0(r)\}\leq M,
\end{align}
where $\alpha_0(r)$ and $\beta_0(r)$ are given in \eqref{aa14}, and $M$ is a positive constant. Here $\alpha_0(0)=\lim_{r\rightarrow0^+}\alpha_0(r)$ and $\beta_0(0)=\lim_{r\rightarrow0^+}\beta_0(r)$ which are assumed to exist. Then, for $1<\gamma<3$, the radially symmetric Euler equations \eqref{Euler1}-\eqref{Euler3} admit a global $C^1$ solution $(\rho,u)(r,t)$ on the entire domain $r\geq0, t\geq0$. Moreover, the solution
satisfies $\rho(0,t)=0$, $u(0,t)=0$ and
\begin{align}\label{cc2}
\frac{2\sqrt{K\gamma}}{\gamma-1}\rho^{\frac{\gamma-1}{2}}(r,t)\leq u(r,t)\leq 2C_0,\quad
\rho(r,t)>0,\quad \forall\ r>0, t\geq0
\end{align}
and
\begin{align}\label{cc3}
\min_{r\geq0, t\geq0}(\alpha,\beta)(r,t)\geq0,\quad \max_{r\geq0, t\geq0}(\alpha,\beta)(r,t)\leq M.
\end{align}
\end{theorem}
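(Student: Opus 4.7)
}

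The plan is to obtain the solution as a limit of the solutions produced by Theorem \ref{ex 1} on the shrinking family of domains $\Omega_b$ based on $[b,\infty)$ as $b\to 0^+$, and then verify that the limit is a classical solution on the whole half line including the boundary behavior $\rho(0,t)=u(0,t)=0$. The key structural ingredient is the observation (quoted from \cite{CCW}) that the characteristic through $(0,0)$ is exactly the line $r=0$, which implies that $\bigcup_{b>0}\Omega_b=\{(r,t):r>0,\ t\geq 0\}$.

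First, I would fix any $b>0$ and restrict the data to $r\in[b,\infty)$. Since $\rho_0(b)>0$ (as $\rho_0(r)>0$ for $r>0$) and the hypotheses \eqref{inbo0} and \eqref{cc1} hold on $[b,\infty)$, Theorem \ref{ex 1} applies and produces a $C^1$ solution $(\rho^{(b)},u^{(b)})$ on $\Omega_b$ satisfying \eqref{aa15}--\eqref{aa16}, with constants $C_0$ and $M$ independent of $b$ but with a density lower bound of the form $\big(\tfrac{b}{b+2C_0 t}\big)^m \bar\rho^{(b)} e^{-M_b t}$ which does degenerate as $b\to 0^+$. By finite speed of propagation and the classical uniqueness of $C^1$ solutions to the hyperbolic system \eqref{Euler1}--\eqref{Euler3}, if $0<b'<b$ then $\Omega_b\subset\Omega_{b'}$ and the two solutions coincide on $\Omega_b$. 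Hence the family $\{(\rho^{(b)},u^{(b)})\}_{b>0}$ defines a single $C^1$ function $(\rho,u)$ on $\bigcup_{b>0}\Omega_b=\{r>0,\ t\geq 0\}$. For any fixed $(r^*,t^*)$ with $r^*>0$, choosing $b$ small enough that $(r^*,t^*)\in\Omega_b$, the pointwise bounds \eqref{aa15} and \eqref{aa16} immediately transfer, giving \eqref{cc2} on $r>0$ and \eqref{cc3} on $r>0$. In particular the density stays strictly positive away from the axis.

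Next I would handle the boundary $r=0$. The assumption $u_0(0)=0$ together with $\rho_0(0)=0$ (so $h_0(0)=0$) means $w_0(0)=z_0(0)=0$. Because $r=0$ is a characteristic of both families (the result from \cite{CCW}), the Riemann invariants $w,z$ at $r=0$ evolve purely under the source terms $\mp\frac{m}{r}uh$; using the continuity of $(\rho,u)$ as $r\to 0^+$ inherited from the limiting procedure, and the fact that $\frac{hu}{r}\to 0$ by combining the upper bound $u\leq 2C_0$ with the $C^1$ control $\alpha_0,\beta_0$ bounded, one concludes $w(0,t)=z(0,t)=0$, i.e.\ $\rho(0,t)=u(0,t)=0$. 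The $C^1$ regularity at $r=0$ follows from the fact that the limits $\alpha_0(0),\beta_0(0)$ exist by hypothesis and $\alpha,\beta$ satisfy the homogeneous-type Riccati equations of Lemma \ref{lemma_ric}, so the uniform bounds \eqref{cc3} extend by continuity.

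The main obstacle will be the behavior at $r=0$, because the density lower bound from Lemma \ref{lem3} and the coefficients $A_i,B_i$ in the Riccati system blow up as $b\to 0^+$, so uniform estimates near the axis cannot be obtained by naive limit-passage. The remedy is exactly that we never need a positive density lower bound at the axis itself: we only need the solution to be defined and $C^1$ there with $\rho(0,t)=0$. By handling the interior $r>0$ via the $b$-family (where the Lemmas \ref{lem1}, \ref{lem2}, \ref{lem3} give clean estimates) and separately verifying the boundary trace on $r=0$ from the governing equations for $w,z$ plus the continuity of the solution, one avoids requiring uniformity in $b$ all the way down to $r=0$. The final step is to observe that the construction is independent of the choice of $b$ and that all asserted bounds \eqref{cc2}--\eqref{cc3} have been established at every point of $\{r\geq 0, t\geq 0\}$, which completes the proof.
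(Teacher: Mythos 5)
Your proposal follows essentially the same route as the paper: fix $b>0$, invoke Theorem \ref{ex 1} on $[b,\infty)$, use that the solutions agree on overlapping domains of dependence, cite the result from \cite{CCW} that the characteristic through the origin is $r=0$ so that $\bigcup_{b>0}\Omega_b=\{r>0,t\geq0\}$, and thereby obtain the global $C^1$ solution on $r>0$ with all the estimates \eqref{cc2}--\eqref{cc3} transferring pointwise. The paper's own proof (embedded in the text immediately preceding the theorem statement) is exactly this exhaustion-by-$b$ argument and is, if anything, terser than yours.

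One cautionary remark: your paragraph justifying $w(0,t)=z(0,t)=0$ and the $C^1$ regularity up to $r=0$ is not fully rigorous as written. The step ``$\tfrac{hu}{r}\to 0$ by combining the upper bound $u\leq 2C_0$ with the $C^1$ control on $\alpha_0,\beta_0$'' does not follow directly: the boundedness of $\alpha,\beta$ controls the combination $u_r+\tfrac{2}{\gamma-1}h_r+\tfrac{m}{r}\tfrac{hu}{c_2}$ (and its counterpart), not the individual singular term, and propagating in time the estimate $h=O(r)$ needed for $\tfrac{h}{r}$ to be bounded near the axis requires a separate argument. This gap is, however, also present in the paper, which simply concludes ``hence we obtain the smooth solution on the entire domain'' without detailing the boundary trace at $r=0$; so your proposal matches the paper's level of rigor and its approach.
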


There is another choice that the density $\rho$ is positive at the origin. Inspired in \cite{sideris1} and \cite{CCW}, we may construct a smooth solution satisfying affine motion close to $r=0$ and then combine it with the smooth solution obtained in Problem 2 to acquire a global solution on the entire domain $r\geq0, t\geq0$.

In order to proceed, we start from the affine motions
\begin{align}
r(y, t)=a(t) y.
\end{align}
In material coordinates, the velocity associated to this motion is
\begin{align}\label{bb3}
u(r(y, t), t)=\frac{d}{dt}r(y, t)=a'(t)y,
\end{align}
from which we get the material time derivative of the
velocity
\begin{align}\label{aa24}
D_tu(r(y, t),t)=a''(t)y.
\end{align}
We take the material derivative on $\rho$ and employ the equation \eqref{Euler1} to achieve
\begin{align}
\frac{d}{dt}\rho(r(y, t), t)=&\frac{d}{dt}\rho(a(t)y, t) =\partial_t\rho+u\partial_r\rho \nonumber \\
=&-\frac{m}{r}\rho u-\rho\partial_r u=-(m+1)\rho \frac{a'(t)}{a(t)}.
\end{align}
Thus there holds
\begin{align}
\frac{d}{dt}\left(\ln(a^{m+1}\rho)\right)=0,
\end{align}
which leads to
\begin{align}\label{aa25}
\rho(r(y, t), t)=\rho(a(t)y, t)=\left(\frac{a(0)}{a(t)}\right)^{m+1}\rho_{0A}(y),
\end{align}
where $\rho_{0A}(y)$ is the initial value. Notice that the equation \eqref{Euler2} can be rewritten as
\begin{align}
D_tu+\frac{\gamma K}{\gamma-1}\partial_r\rho^{\gamma-1}=0,
\end{align}
which together with \eqref{aa24} and \eqref{aa25} yields
\begin{align}
a''(t)y+\frac{\gamma K}{\gamma-1}\left(\frac{a(0)}{a(t)}\right)^{(m+1)(\gamma-1)} \cdot\frac{1}{a}\partial_y(\rho_{0A}^{\gamma-1}(y))=0.
\end{align}
Hence we could separate the equations for $a(t)$ and $\rho_0(y)$ as
\begin{align}\label{aa26}
a''(t)=\frac{1}{a(t)}\left(\frac{a(0)}{a(t)}\right)^{(m+1)(\gamma-1)},\quad
y=-\frac{\gamma K}{\gamma-1}\partial_y\big(\rho_{0A}^{\gamma-1}(y)\big).
\end{align}
It follows directly by the second equation of \eqref{aa26} that
\begin{align}\label{aa27}
\rho_{0A}(y)=\left(\rho_c^{\gamma-1}-\frac{\gamma-1}{2\gamma K}y^2\right)^\frac{1}{\gamma-1},
\end{align}
where $ \rho_{0A}(0)=\rho_c$ is a positive constant. Moreover, for affine motion $a$, one has $r(y, 0)=y$, which implies that $a(0)=1$. Thus we could have the ODE problem
\begin{align}\label{aa28}
a''(t)=a^{-(m+1)(\gamma-1)-1},\quad
a(0)=1,\quad a'(0)=v_a,
\end{align}
where $v_a>0$ is the initial velocity. This problem equals to
\begin{align}\label{aa29}
a'(t)=\left(\frac{2}{d(\gamma-1)}(1-a^{-(m+1)(\gamma-1)})+(v_a)^2\right)^{\frac{1}{2}},\quad
a(0)=1.
\end{align}
It is obvious that for $t>0$
\begin{align}\label{aa30}
0<v_a<a'(t)\leq\left(\frac{2}{(m+1)(\gamma-1)}+(v_a)^2\right)^{\frac{1}{2}},\quad a(t)>1,
\end{align}
By the standard ODE theory, we know that there exists a global unique solution for problem \eqref{aa29}, which equivalently gives a global unique solution for problem \eqref{aa28}.

Let's construct the central affine solution. For any fixed $\rho_c>0$ and $v_a>0$, we know by \eqref{bb3} and \eqref{aa27} that
\begin{align}\label{cc4}
\frac{2\sqrt{K\gamma}}{\gamma-1}\rho_{0A}^{\frac{\gamma-1}{2}}(y) =\frac{2\sqrt{K\gamma}}{\gamma-1}(\rho_c^{\gamma-1}-\frac{\gamma-1}{2\gamma K}y^2)^\frac{1}{2}\quad\mbox{and}\quad u_0=v_a y,
\end{align}
which implies there exists  $b>0$ such that
\begin{align}\label{aa31}
\frac{2\sqrt{K\gamma}}{\gamma-1}\rho_{0A}^{\frac{\gamma-1}{2}}(b)\leq u_0(b).
\end{align}
Thus we consider the initial data that satisfies the following assumption
\begin{asu}\label{asu_3}
Assume that the initial data $(\rho_0,u_0)(r)\in C^1([0,\infty))$ satisfy that $(\rho_0, u_0)(r)=(\rho_{0A}(r), v_a r)$ when $0\leq r\leq b$, and (\ref{inbo0}) holds when $r>b$.
\end{asu}
Denote the domain of dependence with base $r\in[0, b]$ by $\Pi$. We know that the right boundary of $\Pi$ is the $1$-characteristic starting from $(b,0)$, that is $r=B_b(t)$. It is easy to see by \eqref{aa31} that this boundary is moving outward, \textit{i.e.} away from the origin, as $t$ increases. Inside the region $\Pi$, we have
\begin{align}\label{aa32}
(\rho, u)(r,t)= \left(\frac{\rho_{0A}\big(\frac{r}{a(t)}\big)}{a(t)^{m+1}}, \frac{a'(t)}{a(t)}r\right).
\end{align}
which together with \eqref{aa27} and \eqref{aa30} leads to the uniform $L^\infty$ upper bound of $(\rho,u)$
\begin{align}\label{aa33}
\rho\leq \rho_c, \qquad u<\left(\frac{2}{(m+1)(\gamma-1)}+(v_a)^2\right)^{\frac{1}{2}}b.
\end{align}

Now we seek suitable conditions such that the above constructed affine solution satisfy the boundary condition of Theorem \ref{ex 2} on the $1$-characteristic $r=B_b(t)$.
By the expression of the affine solution in \eqref{aa32}, one has
\begin{align}\label{add34}
h(r,t)=\sqrt{K\gamma}\rho^{\frac{\gamma-1}{2}} =\sqrt{K\gamma}a^{-\frac{(m+1)(\gamma-1)}{2}}\sqrt{\rho_{c}^{\gamma-1}-\frac{\gamma-1}{2\gamma K}\frac{r^2}{a^2}},
\end{align}
for $y=\frac{r}{a}\in[0,b]$,
from which we obtain
\begin{align}\label{aa35}
\begin{array}{l}
\displaystyle w=\frac{a'}{a}r +\frac{2\sqrt{K\gamma}}{\gamma-1} a^{-\frac{(m+1)(\gamma-1)}{2}}\sqrt{\rho_{c}^{\gamma-1}-\frac{\gamma-1}{2\gamma K}\frac{r^2}{a^2}},\\[12pt]
\displaystyle z=\frac{a'}{a}r -\frac{2\sqrt{K\gamma}}{\gamma-1} a^{-\frac{(m+1)(\gamma-1)}{2}}\sqrt{\rho_{c}^{\gamma-1}-\frac{\gamma-1}{2\gamma K}\frac{r^2}{a^2}}.
\end{array}
\end{align}
It is easy to see that $w^2\geq z^2$, which together with the equation of $z$ \eqref{z eq}
yields that $z$ is increasing along the curve $B_b(t)$. Thus
$$
z(B_b(t),t)> z(b,0).
$$
In view of \eqref{aa35} and the properties of $a(t)$, if
\begin{align}\label{aa36}
z(b,0)=v_a b -\frac{2\sqrt{K\gamma}}{\gamma-1} \sqrt{\rho_{c}^{\gamma-1}-\frac{\gamma-1}{2\gamma K}b_{A}^2}\geq0,
\end{align}
one then has $z(B_b(t),t)>0$ and then $c_1=u-h>0$ on $r=B_b(t)$ by $1<\gamma< 3$.
We next differentiate $w$ and $z$ in \eqref{aa35} with respect to $r$ to acquire
\begin{align}\label{aa37}
\begin{array}{l}
\displaystyle w_r=\frac{a'}{a}-(K\gamma)^{-\frac{1}{2}}a^{-\frac{(m+1)(\gamma-1)+4}{2}}\bigg(\rho_{c}^{\gamma-1} -\frac{\gamma-1}{2K\gamma}\frac{r^2}{a^2}\bigg)^{-\frac{1}{2}}r,\\[12pt]
\displaystyle
z_r=\frac{a'}{a}+(K\gamma)^{-\frac{1}{2}}a^{-\frac{(m+1)(\gamma-1)+4}{2}}\bigg(\rho_{c}^{\gamma-1} -\frac{\gamma-1}{2K\gamma}\frac{r^2}{a^2}\bigg)^{-\frac{1}{2}}r.
\end{array}
\end{align}
According to the expressions of $\alpha$ and $\beta$ in \eqref{def alpha} and \eqref{def beta},
we find that, if $w_r\geq0$ on $r=B_b(t)$, then there holds $\alpha>0$. To get it, by the expression of $w_r$ in \eqref{aa37} and the relation $r=a(t)y$, it suffices to
\begin{align}\label{aa38}
a^{\frac{(m+1)(\gamma-1)}{2}}a'(t)\geq (K\gamma)^{-\frac{1}{2}}\bigg(\rho_{c}^{\gamma-1} -\frac{\gamma-1}{2K\gamma}y^2\bigg)^{-\frac{1}{2}}y.
\end{align}
Due to the monotonic increasing property of $y$ on the left-side of inequality \eqref{aa38}, we only need
\begin{align}\label{aa39}
a^{\frac{(m+1)(\gamma-1)}{2}}a'(t)\geq (K\gamma)^{-\frac{1}{2}}\bigg(\rho_{c}^{\gamma-1} -\frac{\gamma-1}{2K\gamma}b^2\bigg)^{-\frac{1}{2}}b,
\end{align}
by $y\in[0,b]$. On the other hand, one recalls the properties of $a(t)$ to obtain
$$
a(t)\geq1,\quad a'(t)\geq v_a.
$$
Hence to ensure $\alpha>0$ on $r=B_b(t)$, we only need
\begin{align}\label{aa40}
v_a\geq (K\gamma)^{-\frac{1}{2}}\bigg(\rho_{c}^{\gamma-1} -\frac{\gamma-1}{2K\gamma}b^2\bigg)^{-\frac{1}{2}}b.
\end{align}
For the value $\beta(b,0)$, we calculate by \eqref{aa37} and the expression of $\beta$
\begin{align}\label{aa41}
\beta(b,0)=&v_a+(K\gamma)^{-\frac{1}{2}}\bigg(\rho_{c}^{\gamma-1} -\frac{\gamma-1}{2K\gamma}b^2\bigg)^{-\frac{1}{2}}b \nonumber \\
&-\frac{m}{b}\cdot\frac{v_ab\cdot \sqrt{K\gamma}\sqrt{\rho_{c}^{\gamma-1}-\frac{\gamma-1}{2K\gamma}b^2}}{v_ab-\sqrt{K\gamma} \sqrt{\rho_{c}^{\gamma-1}-\frac{\gamma-1}{2K\gamma}b^2}} \nonumber \\
>& v_a-\frac{mv_a \sqrt{K\gamma}\sqrt{\rho_{c}^{\gamma-1}-\frac{\gamma-1}{2K\gamma}b^2}}{v_ab-\sqrt{K\gamma} \sqrt{\rho_{c}^{\gamma-1}-\frac{\gamma-1}{2K\gamma}b^2}} \nonumber \\
=& \frac{v_a \big[ v_ab-(m+1) \sqrt{K\gamma}\sqrt{\rho_{c}^{\gamma-1}-\frac{\gamma-1}{2K\gamma}b^2}\big]}{v_ab-\sqrt{K\gamma} \sqrt{\rho_{c}^{\gamma-1}-\frac{\gamma-1}{2K\gamma}b^2}},
\end{align}
from which one has $\beta(b,0)\geq0$ provided that
\begin{align}\label{aa42}
v_ab\geq(m+1) \sqrt{K\gamma}\sqrt{\rho_{c}^{\gamma-1}-\frac{\gamma-1}{2K\gamma}b^2}.
\end{align}
Summing up \eqref{aa36}, \eqref{aa40} and \eqref{aa42}, we see that if the parameters $v_a, \rho_c$ and $b$ satisfy
\begin{align}\label{aa43}
\rho_{c}^{\gamma-1}>\frac{\gamma-1}{2K\gamma}b^2,
\end{align}
and
\begin{align}\label{aa44}
v_a   \geq & \max\bigg\{\frac{(m+1)\sqrt{K\gamma}}{b} \sqrt{\rho_{c}^{\gamma-1}-\frac{\gamma-1}{2K\gamma}b^2}, \nonumber \\   & \frac{2\sqrt{K\gamma}}{(\gamma-1)b} \sqrt{\rho_{c}^{\gamma-1}-\frac{\gamma-1}{2K\gamma}b^2}, \ \ \frac{b}{\sqrt{K\gamma}\sqrt{\rho_{c}^{\gamma-1}-\frac{\gamma-1}{2K\gamma}b^2}}\bigg\},
\end{align}
then there hold
\begin{align}\label{aa45}
\begin{array}{c}
\alpha(B_b(t),t)>0,\quad \beta(b,0)\geq0,\\
w(B_b(t),t)>z(B_b(t),t)\geq0,\quad c_1(B_b(t),t)>0.
\end{array}
\end{align}
Note that the function $z$ is strictly monotonically increasing along $\Gamma$, we can obtain $z(B_b(t),t)>0$ for $t>0$. Moreover, one recall the equation of $\beta$ in \eqref{beta_eq} with $A_1>0$ on $r=B_b(t)$ to easily achieve $\beta(B_b(t),t)>0$. We next derive the upper bound of $(\alpha, \beta)(B_b(t),t)$. It suggests by \eqref{aa30} and \eqref{aa37} that
\begin{align}\label{aa46}
w_r(B_b(t),t)\leq \frac{a'(t)}{a(t)}\leq \left(\frac{2}{(m+1)(\gamma-1)}+(v_a)^2\right)^{\frac{1}{2}},
\end{align}
which together with \eqref{aa33} and \eqref{def alpha} arrives at
\begin{align}\label{aa47}
\alpha(B_b(t),t)= w_r(B_b(t),t) +\frac{mh(B_b(t),t)u(B_b(t),t)}{B_b(t)c_2(B_b(t),t)}\leq M,
\end{align}
where the positive constant $M$ depends only on $v_a$. In addition, by the construction of the affine solution in \eqref{aa32}, it is easy to check that all compatibility conditions in \eqref{aa21} are satisfied. Hence, all conditions of Theorem \ref{ex 2} hold provided that the parameters $v_a, \rho_c$ and $b$ satisfy \eqref{aa43} and \eqref{aa44}.

Therefore, we have
\begin{theorem}\label{ex 3}
Let the initial data $(\rho_0(r), u_0(r))\in C^1([0,\infty))$ satisfy Assumption \ref{asu_3} and $\min_{r\in[0,\infty)}\rho_0(r)>0$.
Suppose that the positive parameters $v_a, \rho_c$ and $b$ satisfy \eqref{aa43} and \eqref{aa44}, where $b>0$, $\rho_c=\rho_0(0)$, $v_a=\frac{u_0(b)}{b}$. Then, for $1<\gamma<3$, the radially symmetric solution of Euler equations  \eqref{Euler1}-\eqref{Euler3} admit a global $C^1$ solution $(\rho,u)(r,t)$ on the entire domain $r\geq0, t\geq0$. Moreover, the solution takes the form in \eqref{aa32} on the left-hand region of the 1-characteristic starting from $(b,0)$, and satisfies \eqref{aa22} and \eqref{aa23} on its right-hand region.
\end{theorem}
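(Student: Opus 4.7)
The plan is to construct the global solution as a piecewise object: use the explicit affine motion from \eqref{aa32} on the central region $\Pi$ (to the left of the $1$-characteristic $r=B_b(t)$ issuing from $(b,0)$) and invoke Theorem \ref{ex 2} on the complementary right region. The hard part is not producing either piece (the affine piece is explicit and globally defined by the ODE analysis in \eqref{aa28}--\eqref{aa30}; the right piece is handed to us by Theorem \ref{ex 2}) but checking that the trace of the affine solution on the curve $r=B_b(t)$ supplies admissible boundary data for Theorem \ref{ex 2}, so that the two pieces can be glued into a $C^1$ solution on $\{r\ge 0,\,t\ge 0\}$.

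First, using Assumption \ref{asu_3}, I would write down the affine solution on $\Pi$ by solving \eqref{aa29} for $a(t)$, obtaining a global smooth $a(t)$ with the uniform bounds \eqref{aa30}. This yields $(\rho,u)(r,t)$ via \eqref{aa32} with the explicit bounds \eqref{aa33} on $\Pi$, and shows that $r=B_b(t)$, defined through \eqref{aa18}, is a well-posed outgoing $1$-characteristic. Second, I would verify that the trace $(\rho_b(t),u_b(t))=(\rho(B_b(t),t),u(B_b(t),t))$ computed from \eqref{aa32} satisfies Assumption \ref{asu_2} and has $\min_{t\in[0,T]}\rho_b(t)>0$ for every $T>0$: the lower bound on $u_b-\tfrac{2\sqrt{K\gamma}}{\gamma-1}\rho_b^{(\gamma-1)/2}$ is exactly the statement $z(B_b(t),t)\ge 0$, which follows because $z$ is nondecreasing along $B_b(t)$ by \eqref{z eq} combined with $u,h\ge 0$, once \eqref{aa36} is enforced by the first clause of \eqref{aa44}; and the upper bound $u_b(t)\le C_0$ follows from \eqref{aa33}.

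Third, I would check the boundary R/C bounds \eqref{aa19} required by Theorem \ref{ex 2}. The already established computation in \eqref{aa37}--\eqref{aa47} shows that \eqref{aa43} together with \eqref{aa44} guarantees $\alpha_b(t)>0$, and gives the upper bound $\alpha_b(t)\le M$ for some constant $M$ depending only on $v_a$; in particular $w_r\ge 0$ on $B_b(t)$ is the key pointwise inequality driving this, and it is monotonic in $y\in[0,b]$ so testing only at $y=b$ suffices. Fourth, the compatibility conditions \eqref{aa21} at the corner $(b,0)$ are automatic from the explicit formula \eqref{aa32}: $z$ satisfies $\partial_1 z=\tfrac{m}{r}uh$ along $B_b(t)$ by \eqref{z eq}, which is exactly the second line of \eqref{aa21}, and the matching of initial values and first derivatives at $r=b$ follows from Assumption \ref{asu_3} combined with direct differentiation of \eqref{aa27} at $y=b$.

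Finally, with Theorem \ref{ex 2} producing a global $C^1$ solution to the right of $B_b(t)$ satisfying \eqref{aa22}--\eqref{aa23}, I would glue the two solutions across $B_b(t)$. Since $B_b(t)$ is a $1$-characteristic for both solutions, and since the two traces agree together with their first-order tangential and normal derivatives by the compatibility \eqref{aa21} and the fact that $w$ and $z$ satisfy the same transport equations \eqref{w eq}--\eqref{z eq} on both sides, the pieced solution is $C^1$ across $B_b(t)$. The bounds \eqref{aa22}--\eqref{aa23} on the right side combined with \eqref{aa33} on $\Pi$ yield the global statement; the main obstacle throughout is the bookkeeping of the inequalities \eqref{aa43}--\eqref{aa44}, which is what makes every one of the boundary hypotheses of Theorem \ref{ex 2} simultaneously achievable.
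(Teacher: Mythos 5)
Your proposal is correct and follows essentially the same route as the paper: the paper's equations \eqref{aa26}--\eqref{aa47} (immediately preceding the theorem) are precisely what you reconstitute, and the stated proof of Theorem~\ref{ex 3} mostly just adds the observation that parameters satisfying \eqref{aa43}--\eqref{aa44} actually exist. One small item worth adding to your checklist: besides $\alpha_b(t)>0$ on $B_b(t)$ and the compatibility relations \eqref{aa21}, you should also confirm $\beta(b,0)\ge 0$ --- this is the computation \eqref{aa41}--\eqref{aa42}, enforced by the first clause of \eqref{aa44} --- since it is what guarantees that the corner value of the $C^1$ initial data at $r=b$ meets the rarefaction hypothesis \eqref{aa13} that Theorem~\ref{ex 2} inherits from Theorem~\ref{ex 1}.
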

\begin{proof}
Based on the above analysis and the result in Theorem \ref{ex 2}, we just need to check the existence of parameters $v_a, \rho_c$ and $b$ such that \eqref{aa43} and \eqref{aa44} can be satisfied simultaneously.
Indeed, for any fixed positive constant $\rho_c$, we can freely choose the parameter $b$ as long as the following relationship holds
$$
0<b<\sqrt{\frac{2K\gamma}{\gamma-1}\rho_{c}^{\gamma-1}}.
$$
Thus the constants $\rho_c$ and $b$ satisfy
$$
\rho_{c}^{\gamma-1}>\frac{\gamma-1}{2K\gamma}b^2,
$$
which is \eqref{aa43}. Now the three terms in the right-hand side of \eqref{aa44} are fixed constants. Then we choose $v_a$ sufficiently large such that it is greater than these three constants. Hence \eqref{aa44} is fulfilled. Furthermore, the inequality \eqref{aa31} follows directly from \eqref{aa44}. The proof of the theorem is complete.
\end{proof}
\begin{remark}
As we just showed, there is a broad class of positive parameters $v_a, \rho_c$ and $b$ satisfying the conditions in \eqref{aa43} and \eqref{aa44}. This means that there are many affine solutions that meet the requirements.
\end{remark}

\section{Singularity formation}\label{S6}

To prove the singularity formation when the initial data include strong compression somewhere, we need to further decouple the system on $\alpha$ and $\beta$.
In order to do that, we first show the following lemma.
\begin{lemma}\label{eqtilde}
Let $\lambda\geq0$ be any real number. For smooth solution of \eqref{Euler1}-\eqref{Euler3}, we have the following equations on the weighted variables $h^{-\lambda}\alpha$ and $h^{-\lambda}\beta$ 
\begin{align}
\partial_1\big(h^{-\lambda}\beta\big) = &-\frac{1+\gamma}{4}h^{\lambda} \big(h^{-\lambda}\beta\big)^2
      +\Big(\frac{\gamma-3}{4} +\frac{\gamma-1}{2}\lambda\Big)h^{\lambda}
       \big(h^{-\lambda}\alpha\big)\big(h^{-\lambda}\beta\big) \nonumber \\
&+A_1 \big(h^{-\lambda}\alpha\big) -
       B_1 \big(h^{-\lambda}\beta\big)
       +\frac{\gamma-1}{2}\lambda\frac{m}{rc_2}u^2
      \big(h^{-\lambda}\beta\big), \label{dd2} \\
\partial_2\big(h^{-\lambda}\alpha\big) =&-\frac{1+\gamma}{4}h^{\lambda} \big(h^{-\lambda}\alpha\big)^2
      +\Big(\frac{\gamma-3}{4} +\frac{\gamma-1}{2}\lambda\Big)h^{\lambda}
       \big(h^{-\lambda}\alpha\big)\big(h^{-\lambda}\beta\big) \nonumber \\
&+A_2 \big(h^{-\lambda}\beta\big) -
       B_2 \big(h^{-\lambda}\alpha\big)
       +\frac{\gamma-1}{2}\lambda\frac{m}{rc_1}u^2
      \big(h^{-\lambda}\alpha\big). \label{dd3}
\end{align}
\end{lemma}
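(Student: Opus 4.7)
My plan is to derive both identities by a direct product-rule computation, combining the Riccati equations from Lemma \ref{lemma_ric} with the characteristic derivatives of $h$ established in \eqref{p1h}--\eqref{p2h}. Since the two identities are symmetric under the interchange $(\alpha,\beta,\partial_2,c_2,A_1,B_1) \leftrightarrow (\beta,\alpha,\partial_1,c_1,A_2,B_2)$, it suffices to do the first; the second is identical mutatis mutandis.

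For \eqref{dd2}, I would apply the product rule
\[
\partial_1\bigl(h^{-\lambda}\beta\bigr) = -\lambda\,h^{-\lambda-1}(\partial_1 h)\,\beta + h^{-\lambda}\,\partial_1\beta,
\]
substitute $\partial_1 h = -\tfrac{\gamma-1}{2}\tfrac{m}{rc_2}u^2 h - \tfrac{\gamma-1}{2}h\alpha$ from \eqref{p1h}, and substitute the Riccati equation
\[
\partial_1\beta = -\tfrac{1+\gamma}{4}\beta^2 - \tfrac{3-\gamma}{4}\alpha\beta + A_1\alpha - B_1\beta
\]
from Lemma \ref{lemma_ric}. The first term becomes $\tfrac{\gamma-1}{2}\lambda\,h^{-\lambda}\beta\bigl(\tfrac{m}{rc_2}u^2 + \alpha\bigr)$, contributing a correction $\tfrac{\gamma-1}{2}\lambda\,h^{-\lambda}\alpha\beta$ to the coefficient of $\alpha\beta$ and the inhomogeneous-looking term $\tfrac{\gamma-1}{2}\lambda\tfrac{m}{rc_2}u^2\,h^{-\lambda}\beta$. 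Collecting the $\alpha\beta$ terms gives coefficient $-\tfrac{3-\gamma}{4} + \tfrac{\gamma-1}{2}\lambda = \tfrac{\gamma-3}{4} + \tfrac{\gamma-1}{2}\lambda$, in agreement with the claimed identity.

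The final step is cosmetic: rewrite $h^{-\lambda}\beta^2 = h^{\lambda}(h^{-\lambda}\beta)^2$ and $h^{-\lambda}\alpha\beta = h^{\lambda}(h^{-\lambda}\alpha)(h^{-\lambda}\beta)$ so the right-hand side is expressed in the weighted variables $h^{-\lambda}\alpha$ and $h^{-\lambda}\beta$, yielding exactly \eqref{dd2}. The computation of \eqref{dd3} is entirely parallel, using \eqref{p2h} and the $\alpha$-equation from Lemma \ref{lemma_ric}. There is no real obstacle here; the only thing to watch is the sign bookkeeping in the $\alpha\beta$ coefficient and the fact that the $\tfrac{\gamma-1}{2}\lambda\,h^{-\lambda}\alpha\beta$ contribution coming from the $-\tfrac{\gamma-1}{2}h\alpha$ piece of $\partial_1 h$ combines with the $-\tfrac{3-\gamma}{4}\alpha\beta$ term from the Riccati equation with the right sign.
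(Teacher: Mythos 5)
Your computation is correct and is essentially the same argument as the paper's: the paper equates two expressions for $\partial_1\beta$ (one from expanding $\partial_1(h^{\lambda}\cdot h^{-\lambda}\beta)$ using \eqref{p1h}, the other from Lemma \ref{lemma_ric}) and then solves for $h^{\lambda}\partial_1(h^{-\lambda}\beta)$, whereas you apply the product rule directly to $\partial_1(h^{-\lambda}\beta)$ and substitute \eqref{p1h} and the Riccati equation — an algebraically equivalent route. The sign bookkeeping in the $\alpha\beta$ coefficient and the extra $\tfrac{\gamma-1}{2}\lambda\tfrac{m}{rc_2}u^2(h^{-\lambda}\beta)$ term both come out as in \eqref{dd2}, and the symmetry remark for \eqref{dd3} is fine.
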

\begin{proof}
By direct calculations, one has
\begin{align}\label{aa11}
\partial_1\beta=& \partial_1(h^{\lambda}\cdot h^{-\lambda}\beta)= (h^{-\lambda}\beta)\partial_1(h^{\lambda}) +
    h^{\lambda}\partial_1(h^{-\lambda}\beta)\nonumber\\
=& (h^{-\lambda}\beta)\lambda h^{\lambda-1}\partial_1(h)+h^{\lambda}\partial_1(h^{-\lambda}\beta)\nonumber\\
= & (h^{-\lambda}\beta)\lambda h^{\lambda-1}(-\frac{\gamma-1}{2}h\alpha-\frac{\gamma-1}{2}\frac{m}{rc_2}u^2h) +h^{\lambda}\partial_1(h^{-\lambda}\beta)\nonumber\\
=& h^{\lambda}\partial_1(h^{-\lambda}\beta)
      - \frac{\gamma-1}{2}\lambda\frac{m}{rc_2}u^2h^{\lambda}
      (h^{-\lambda}\beta) \nonumber \\
&-\frac{\gamma-1}{2}\lambda h^{2\lambda}(h^{-\lambda}\alpha)(h^{-\lambda}\beta).
\end{align}
On the other hand, it follows by the equation of $\beta$ in Lemma \ref{lemma_ric} that
\begin{align}\label{aa12}
\partial_1\beta
   =& -\frac{1+\gamma}{4}h^{2\lambda} (h^{-\lambda}\beta)^2
      -\frac{3-\gamma}{4}h^{2\lambda}
       (h^{-\lambda}\alpha)(h^{-\lambda}\beta) \nonumber \\
&+A_1 h^{\lambda}(h^{-\lambda}\alpha) - B_1 h^{\lambda}(h^{-\lambda}\beta).
\end{align}
Combining \eqref{aa11} and \eqref{aa12} and arranging the resulting gets the equation \eqref{dd2}. The equation \eqref{dd3} can be derived analogously.
\end{proof}

Specifically, we take $\lambda=\frac{3-\gamma}{2(\gamma-1)}$ in \eqref{dd2} and \eqref{dd3} and denote  
\begin{align}\label{dd17}
\tilde{\alpha} = h^{\frac{\gamma-3}{2(\gamma - 1)}}\alpha, \quad
\tilde{\beta} = h^{\frac{\gamma-3}{2(\gamma - 1)}}\beta,
\end{align}
to obtain
\begin{align}
\partial_1\tilde{\beta} = -\frac{1+\gamma}{4}h^{\frac{3-\gamma}{2(\gamma-1)}}\tilde{\beta}^2 +\frac{3-\gamma}{4}\frac{m}{r}\frac{u^2}{c_2}\tilde{\beta} -B_1\tilde{\beta}+A_1\tilde{\alpha}, \label{tildeab}\\
\partial_2\tilde{\alpha} = -\frac{1+\gamma}{4}h^{\frac{3-\gamma}{2(\gamma-1)}}\tilde{\alpha}^2 +\frac{3-\gamma}{4}\frac{m}{r}\frac{u^2}{c_1}\tilde{\alpha} -B_2\tilde{\alpha}+A_2\tilde{\beta}. \label{tildeab2}
\end{align}
We note that these two equations are decoupled in its leading quadratic order term.

To prove the desired singularity formation result,
one difficulty is to control the last term in \eqref{tildeab} and \eqref{tildeab2}. We cannot directly use the upper bound of $\alpha$ and $\beta$ in Lemma \ref{lem2} and the density lower bound in Lemma \ref{lem3}, since our initial $\alpha$ and $\beta$ might be negative somewhere.  Instead, we use the upper bound on gradient variables  to re-establish the density lower bound which do not require the initial conditions $\alpha(r,0)\geq0$ and $\beta(r,0)\geq0$.

We first take $\lambda=\frac{2}{\gamma-1}$ in \eqref{dd2} and \eqref{dd3} and denote
\begin{align}\label{dd11}
\hat{\alpha} = h^{-\frac{2}{\gamma - 1}}\alpha, \quad
\hat{\beta} = h^{-\frac{2}{\gamma - 1}}\beta,
\end{align}
to acquire
\begin{align}
\partial_1\hat{\beta} = &\frac{1+\gamma}{4}h^{\frac{2}{\gamma-1}}\hat{\beta}(\hat{\alpha}-\hat{\beta}) +A_1 \hat{\alpha} -B_1 \hat{\beta}
       +\frac{m u^2}{rc_2}\hat{\beta}, \label{dd4} \\
\partial_2\hat{\alpha} =&\frac{1+\gamma}{4}h^{\frac{2}{\gamma-1}}\hat{\alpha} (\hat{\beta}-\hat{\alpha}) +A_2\hat{\beta}-B_2 \hat{\alpha}+\frac{m u^2}{rc_1}\hat{\alpha}. \label{dd5}
\end{align}
Recalling Theorem \ref{thm_bounds} and \eqref{dd1}, one has for $r\geq b>0$
\begin{align}\label{dd81}
\frac{m u^2}{rc_2}+1\leq C_b,\quad \frac{m u^2}{rc_1}+1\leq C_b
\end{align}
for some positive constant $C_b$ depending on $b$. 

Then \eqref{dd4} and \eqref{dd5} can be rewritten as
\begin{align}
\partial_1\bar{\beta} = &\bigg(\frac{1+\gamma}{4}h^{\frac{2}{\gamma-1}}e^{C_bt}\bar{\beta}+A_1\bigg)(\bar{\alpha}-\bar{\beta}) \nonumber \\
&+(A_1 -B_1) \bar{\beta}+\bigg(\frac{m u^2}{rc_2}-C_b\bigg)\bar{\beta}, \label{dd6} \\
\partial_2\bar{\alpha} =&\bigg(\frac{1+\gamma}{4}h^{\frac{2}{\gamma-1}}e^{C_bt}\bar{\alpha}+A_2 \bigg) (\bar{\beta}-\bar{\alpha}) \nonumber \\
&+(A_2-B_2) \bar{\alpha}+\bigg(\frac{m u^2}{rc_1}-C_b\bigg)\bar{\alpha}. \label{dd7}
\end{align}
where
\begin{align}\label{dd8}
\bar{\alpha}=e^{-C_bt}\hat{\alpha},\quad \bar{\beta}=e^{-C_bt}\hat{\beta}.
\end{align}

Denote
\begin{align*}
M_0=
\left\{
\begin{array}{l}
\dspt \max_{r\in[b,\infty)}(\alpha, \beta)(r,0) \quad {\rm for\ Problem\ 1}, \\
\dspt \max\{\max_{r\in[b,\infty)}(\alpha, \beta)(r,0), \max_{t\in[0,T_0]}\alpha(B_b(t),t)\} \quad {\rm for\ Problem\ 2},
\end{array}
\right.
\end{align*}
and
$$
\bar{M}=1+(K\gamma)^{-\frac{1}{\gamma-1}}\frac{M_0}{\bar{\rho}},
$$
where $\bar{\rho}$ is defined as in Lemma \ref{lem3}.
Then we have
\begin{lemma}\label{eqbar}
Assume $1<\gamma<3$. Consider smooth solution on $t\in[0,T_0]$ for Problem 1 or 2 on
$\Omega$, satisfying the Assumption \ref{asu_1} on $(b,\infty)$, with $b>0$.  For Problem 2, we also assume Assumption \ref{asu_2} on the left boundary 1-characteristic $B_b(t)$.
Then there holds
\begin{align}\label{dd9}
\max_{\Omega\cap\{t\leq T_0\}}(\bar\alpha, \bar\beta)(r,t)< \bar{M}.
\end{align}
\end{lemma}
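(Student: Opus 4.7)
The plan is to argue by contradiction, mimicking the first-exit-time scheme used in Lemmas \ref{lem1} and \ref{lem2}, while exploiting the three favorable signs built into the weighted system \eqref{dd6}--\eqref{dd7}. The point of the weights $h^{-2/(\gamma-1)}$ and $e^{-C_b t}$ is precisely to turn the two ``bad'' terms of \eqref{dd4}--\eqref{dd5} into manifestly dissipative ones; with that structure in place, a simple maximum-principle-style argument should close the bound.

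First I would verify that $\bar M$ strictly dominates the initial and (for Problem 2) boundary values of $\bar\alpha$ and $\bar\beta$. Since $h^{-2/(\gamma-1)} = (K\gamma)^{-1/(\gamma-1)}\rho^{-1}$ and $\rho\ge\bar\rho$ on $\{t=0\}$ (respectively on $r=B_b(t)$ for Problem 2), definitions \eqref{dd11} and \eqref{dd8} give at $t=0$
\[
\max\{\bar\alpha,\bar\beta\}(r,0)\le (K\gamma)^{-\frac{1}{\gamma-1}}\frac{M_0}{\bar\rho}<\bar M,
\]
and an identical bound holds on $r=B_b(t)$ for Problem 2 (the only boundary datum we use is $\bar\alpha$; the value of $\bar\beta$ on $B_b(t)$ is inherited by propagation from the point $(b,0)$ along the boundary 1-characteristic).

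Next I would suppose for contradiction that $T\in(0,T_0]$ is the first time at which either $\bar\alpha$ or $\bar\beta$ attains the value $\bar M$, at some point $(r^*,T)\in\Omega$. Exactly as in the proof of Lemma \ref{lem1}, the finite propagation speed gives a characteristic triangle (Problem 1) or quadrangle (Problem 2) $\Pi_0$ with vertex $(r^*,T)$ throughout whose interior and up to its base we still have $\bar\alpha,\bar\beta<\bar M$. Without loss of generality assume $\bar\beta(r^*,T)=\bar M$ and $\bar\alpha(r^*,T)\le\bar M$, the other case being symmetric via \eqref{dd7}. Evaluating the right-hand side of \eqref{dd6} at $(r^*,T)$, I expect each of its three pieces to be nonpositive and the last one to be \emph{strictly} so: the prefactor $\frac{1+\gamma}{4}h^{2/(\gamma-1)}e^{C_bT}\bar\beta+A_1$ is $\ge 0$ (using $\bar M\ge 1>0$ and $A_1\ge 0$ from \eqref{A12}) and multiplies $\bar\alpha-\bar\beta\le 0$; the term $(A_1-B_1)\bar\beta$ is $\le 0$ because the identity $B_1-A_1=(3-\gamma)\frac{m u^2h^2}{rc_1^2c_2}\ge 0$ established in the proof of Lemma \ref{lem2} uses only $1<\gamma<3$ and the $L^\infty$ bounds of Theorem \ref{thm_bounds}; finally, by the very definition of $C_b$ in \eqref{dd81}, one has $\bigl(\frac{mu^2}{rc_2}-C_b\bigr)\bar\beta\le -\bar\beta=-\bar M\le -1$. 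Summing yields $\partial_1\bar\beta(r^*,T)\le -\bar M<0$, which contradicts $\partial_1\bar\beta(r^*,T)\ge 0$, the sign forced by $\bar\beta$ attaining its first maximum $\bar M$ at $(r^*,T)$ along the 1-characteristic through that point.

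The main obstacle, in my view, is methodological rather than computational: one must be sure that each sign fact used above is available \emph{without} a rarefaction hypothesis on the data, since otherwise the argument would circularly rely on Lemmas \ref{lem1}--\ref{lem2}, which are stated under the sign assumption that this lemma is supposed to avoid. As sketched, both $A_i\ge 0$ and $B_i-A_i\ge 0$ depend only on Theorem \ref{thm_bounds} and on $1<\gamma<3$, and the dissipative reserve $mu^2/(rc_i)-C_b\le -1$ is absorbed into the multiplier $e^{-C_bt}$ by design, so this obstacle is in fact dispatched by inspection. A minor secondary point is the boundary-case bookkeeping for Problem 2 at a vertex $(r^*,T)$ lying on $r=B_b(t)$, but since $\partial_1$ is tangent to this 1-characteristic boundary, the same computation applies there verbatim.
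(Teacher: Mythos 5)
Your argument is correct and follows the same strategy as the paper: a first-exit-time contradiction exploiting the three nonpositive terms in the weighted Riccati system \eqref{dd6}--\eqref{dd7}, with the signs $A_i\ge 0$, $B_i-A_i\ge 0$ and $\tfrac{mu^2}{rc_i}-C_b\le -1$ available from Theorem \ref{thm_bounds} and \eqref{dd81} alone (no rarefaction hypothesis). The only cosmetic difference is that you analyze the case $\bar\beta=\bar M$ via \eqref{dd6} where the paper analyzes $\bar\alpha=\bar M$ via \eqref{dd7}; the two are symmetric.
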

\begin{proof}
The proof is also based on the contradiction argument. In view of the construction of $\bar{M}$ and \eqref{dd11}, \eqref{dd8}, we first achieve that the initial or boundary data fulfill 
$$
\max_{[b,\infty)}(\bar\alpha,\bar\beta)(r,0)< \bar{M},\quad\hbox{and}\quad
\max_{t\geq0}\bar\alpha(B_b(t),t)< \bar{M}\quad \hbox{for Problem 2.}
$$
Assume that 
there exists some time, such that $\bar\alpha(r^*,t^*)=\bar{M}$ or $\bar\beta(r^*,t^*)=\bar{M}$, at some point $(r^*,t^*)$ with $0<t^*\leq T_0$. Then there exists
a characteristic triangle tip $\Pi_0$ or a characteristic quadrangle tip $\Pi_0$ at $(r_*,t_*)$ such that $\bar\alpha=\bar{M}$ or $\bar\beta=\bar{M}$ at  $(r_*,t_*)$, but  $\bar\alpha\leq \bar{M}$ and $\bar\beta\leq \bar{M}$ in the characteristic triangle or characteristic quadrangle.
Without loss of generality, we assume $\bar\alpha(r_*,t_*)=\bar{M}$ and $\bar\alpha< \bar{M}$ and $\bar\beta\leq \bar{M}$ on $\Pi_0\cap\{t<t_*\}$. Thus one first has
$$
\partial_2\bar{\alpha}(r_*,t_*)\geq0.
$$
On the other hand, we apply \eqref{BA12}, \eqref{dd81} and \eqref{dd7} to obtain
\begin{align}\label{dd10}
\partial_2\bar{\alpha}(r_*,t_*) \leq -\bar{\alpha}(r_*,t_*)<0,
\end{align}
which yields a contradiction. The proof of the lemma is finished.
\end{proof}

Now thanks to \eqref{dd11}, \eqref{dd9} and \eqref{inbo1}, we find that for $t\leq T$
\begin{align}\label{dd12}
\alpha(r,t), \beta(r,t) \leq e^{C_bT}[(\gamma-1)C_0]^{\frac{2}{\gamma-1}}\bar{M},
\end{align}
and then by \eqref{dd17}
\begin{align}\label{dd16}
\tilde\alpha(r,t), \tilde\beta(r,t) \leq e^{C_bT}[(\gamma-1)C_0]^{\frac{\gamma+1}{2(\gamma-1)}}\bar{M}.
\end{align}
Putting \eqref{dd12} into \eqref{aa7} and employing \eqref{aa8} arrives at
\begin{align}\label{dd13}
\partial_0\ln\bigg(\frac{1}{r^m\rho}\bigg)=&\frac{\alpha+\beta}{2} +\frac{mh^2}{2rc_1} +\frac{mh^2}{2rc_2} \nonumber \\
\leq & e^{C_bT_0}[(\gamma-1)C_0]^{\frac{2}{\gamma-1}}\bar{M} +\frac{2m(\gamma-1)}{b(3-\gamma)}C_0 =:\bar{M}_b.
\end{align}
We integrate \eqref{dd13} along $r_0(t)$ and utilize \eqref{aa9} to deduce
\begin{align}\label{dd14}
\rho(r,t)\geq \bigg(\frac{r_0(t_0)}{r}\bigg)^m\rho_0 e^{-\bar{M}_bt} \geq  \bar{\rho}\bigg(\frac{b}{b+2C_0t}\bigg)^m e^{-\bar{M}_bt}.
\end{align}

Based on the density lower bound in \eqref{dd14}, we now derive the singularity formation result.
We first rewrite \eqref{tildeab} and \eqref{tildeab2} as
\begin{align}
\partial_1\tilde{\beta} = &-\frac{1+\gamma}{8}h^{\frac{3-\gamma}{2(\gamma-1)}}\tilde{\beta}^2 \nonumber \\ &+\left\{-\frac{1+\gamma}{8}h^{\frac{3-\gamma}{2(\gamma-1)}}\tilde{\beta}^2 +\frac{3-\gamma}{4}\frac{m}{r}\frac{u^2}{c_2}\tilde{\beta} -B_1\tilde{\beta}+A_1\tilde{\alpha}\right\}\label{tildeab_3},  \\
\partial_2\tilde{\alpha} = &-\frac{1+\gamma}{8}h^{\frac{3-\gamma}{2(\gamma-1)}}\tilde{\alpha}^2 \nonumber \\  &+\left\{-\frac{1+\gamma}{8}h^{\frac{3-\gamma}{2(\gamma-1)}}\tilde{\alpha}^2 +\frac{3-\gamma}{4}\frac{m}{r}\frac{u^2}{c_1}\tilde{\alpha} -B_2\tilde{\alpha}+A_2\tilde{\beta}\right\}.\label{tildeab4}
\end{align}
We want to prove that when the initial $\tilde\beta$ or $\tilde\alpha$ is less than some threshold $N(b)$, then we always have
\beq\label{beta_ineq}
\partial_1\tilde{\beta}\leq -\frac{1+\gamma}{8}h^{\frac{3-\gamma}{2(\gamma-1)}}\tilde{\beta}^2,
\eeq
or
\beq\label{alpha_ineq}
\partial_2\tilde{\alpha} \leq -\frac{1+\gamma}{8}h^{\frac{3-\gamma}{2(\gamma-1)}}\tilde{\alpha}^2,
\eeq
by which, we can estimate the blowup time. In fact, integrating
\eqref{beta_ineq} along the 1-characteristic and assuming $\tilde\beta(r_1(t),t)<0$ for any time $t$ before blowup, we have
\begin{align}\label{aa48}
-\frac{1}{\tilde\beta(r, t)}\leq-\frac{1}{\tilde\beta(r^*,0)}
-\int_0^t \frac{1+\gamma}{8}h^{\frac{3-\gamma}{2(\gamma-1)}}(r_1(s),s)ds,
\end{align}
where $r=r_1(s)$ is the corresponding 1-characteristic curve and $r^*=r_1(0)$. Thanks to the lower bound of $\rho$ in \eqref{aa4}, one obtains
\begin{align}\label{aa49}
h=&\sqrt{K\gamma}\rho^{\frac{\gamma-1}{2}} \nonumber \\
\geq &\sqrt{K\gamma}\bar{\rho}^{\frac{\gamma-1}{2}}\bigg(\frac{b}{b+2C_0 t}\bigg)^{\frac{m(\gamma-1)}{2}} \exp\bigg(-\frac{\bar{M}_b(\gamma-1)}{2}t\bigg),
\end{align}
subsequently
\begin{align}\label{aa50}
h^{\frac{3-\gamma}{2(\gamma-1)}}(r,t)
\geq \widehat{C}\bigg(\frac{b}{b+2C_0 t}\bigg)^{\frac{m(3-\gamma)}{4}},
\end{align}
where
$$
\widehat{C}=(K\gamma)^{\frac{3-\gamma}{4(\gamma-1)}}\bar{\rho}^{\frac{3-\gamma}{4}} \exp\bigg(-\frac{\bar{M}_b(3-\gamma)}{4}T\bigg),
$$
with $t\leq T\leq T_0$. Combining \eqref{aa48} and \eqref{aa50} and using the fact $\frac{m(3-\gamma)}{4}<1$, we know that blowup happens not later than
\begin{align}\label{aa51}
t^*=\frac{b}{2C_0}\bigg\{\bigg(1+\frac{4C_0[4-m(3-\gamma)]} {-\tilde\beta(r^*,0)(\gamma+1)\widehat{C}b}\bigg)^{\frac{4}{4-m(3-\gamma)}}-1\bigg\}<T,
\end{align}
provided that $-\tilde\beta(r^*,0)$ is large enough. It is easy to see that $t^*\rightarrow 0$ as $-\tilde\beta(r^*,0)\rightarrow\infty$.

Noting $r\geq b>0$, the boundedness of $A_i$ and $B_i$ in \eqref{dd15}, the upper bound of $(\tilde\alpha, \tilde\beta)$ in \eqref{dd16}, and the density lower bound in \eqref{dd14},
we can choose $N(b)$ large enough such that
\[
\left(-\frac{1+\gamma}{8}h^{\frac{3-\gamma}{2(\gamma-1)}} \tilde{\beta}^2+\frac{3-\gamma}{4}\frac{m}{r}\frac{u^2}{c_2} \tilde{\beta}-B_1\tilde{\beta}+A_1\tilde{\alpha}\right)<0,
\]
when $\tilde{\beta}\leq -N(b)$ and $0\leq t< T$, and
\[
\left(-\frac{1+\gamma}{8}h^{\frac{3-\gamma}{2(\gamma-1)}} \tilde{\alpha}^2+\frac{3-\gamma}{4}\frac{m}{r}\frac{u^2}{c_1} \tilde{\alpha}-B_2\tilde{\alpha}+A_2\tilde{\beta}\right)<0,
\]
when $\tilde{\alpha}\leq -N(b)$ and $0\leq t< T$.
On the other hand, choose $N(b)$ large enough, such that, if $\min_{r\in[b,\infty)}(\tilde\alpha (r,0),\tilde\beta (r,0))<-N(b)$, then the blowup time is less than $T$. In addition, if $\min_{t\geq0}\tilde\alpha (B_b(t),t)<-N(b)$ for sufficiently large $N(b)$, we can similarly show the blowup time is finite.

In a summary, we proved the following singularity formation theorem:

\begin{theorem}\label{thm_sing}
We consider global solution with $C^1$ initial data satisfying the Assumption \ref{asu_1} on $(b,\infty)$, with $b>0$.  Assume $1<\gamma<3$ and there exists $M_0>0$, such that
$$
\max_{[b,\infty)}(\alpha,\beta)(r,0)< M_0,\quad\hbox{and}\quad
\max_{t\geq0}\alpha(B_b(t),t)< M_0\quad \hbox{for Problem 2,}
$$
For Problem 2, we also suppose that the $C^1$ boundary date satisfy the Assumption \ref{asu_2} on $B_b(t)$.
Then there exists a constant $N(b, T)$, depending on $b$ and $T$, such that,  if
$$
\min_{r\in[b,\infty)}\{\alpha(r,0),\beta(r,0\}\leq -N(b,T), \quad\hbox{or}\quad \min_{t\geq0}\alpha(B_b(t),t)\leq -N(b,T),
$$
then singularity forms before time $T$.
\end{theorem}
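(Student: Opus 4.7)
The plan is to exploit the reformulated evolution equations \eqref{tildeab_3} and \eqref{tildeab4}, in which the Riccati term $-\frac{1+\gamma}{8} h^{\frac{3-\gamma}{2(\gamma-1)}} \tilde\beta^{2}$ has been isolated from a bracketed remainder that collects all lower-order and cross-characteristic contributions. First I would argue that once $\tilde\beta$ becomes sufficiently negative at some point, along the 1-characteristic through that point the remainder is dominated by the leading quadratic, so that
\[
\partial_1 \tilde\beta \;\leq\; -\frac{1+\gamma}{8}\, h^{\frac{3-\gamma}{2(\gamma-1)}}\, \tilde\beta^{2},
\]
which is the classical differential inequality driving finite-time blowup. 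A symmetric argument using \eqref{tildeab4} along a 2-characteristic handles the case of very negative $\tilde\alpha$, so the two scenarios in the theorem reduce to the same analysis.

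To justify that the braced remainder is harmless once $-\tilde\beta \geq N(b,T)$, I would combine three ingredients already in hand. From Theorem \ref{thm_bounds} and \eqref{dd81}, the coefficients $A_1, B_1$ and the factor $m u^{2}/(r c_{2})$ are uniformly bounded on $r\geq b$; from Lemma \ref{eqbar} and the relation \eqref{dd17}, the variable $\tilde\alpha$ satisfies a one-sided upper bound that depends only on $b$, $T$, and the initial/boundary upper bound $M_{0}$; and from \eqref{aa49}--\eqref{aa50}, the density lower bound \eqref{dd14} gives a positive pointwise lower bound on $h^{(3-\gamma)/(2(\gamma-1))}$ throughout $\Omega \cap \{t \leq T\}$. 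Crucially, the density lower bound in \eqref{dd14} was derived without any sign hypothesis on $\alpha(r,0),\beta(r,0)$, so it is available here. Assembling these, each summand in the brace is at worst linear in $\tilde\beta$ or independent of it, while the leading term is quadratic with a coefficient bounded below by a positive constant; so there is $N(b,T)>0$ beyond which the bracketed piece is strictly smaller than the Riccati piece.

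Once this one-sided Riccati inequality is established, the finish is routine. Along the 1-characteristic through a point where $\tilde\beta(r^{*},0) \leq -N(b,T)$, the set $\{\tilde\beta \leq -N(b,T)\}$ is forward-invariant because $\partial_{1}\tilde\beta<0$ on its closure, so $\tilde\beta$ remains in the regime where the inequality applies. Integrating $\partial_{1}(-1/\tilde\beta) \leq -\tfrac{1+\gamma}{8} h^{(3-\gamma)/(2(\gamma-1))}$ along the characteristic and bounding $h^{(3-\gamma)/(2(\gamma-1))}$ from below by $\widehat C\,(b/(b+2C_{0}t))^{m(3-\gamma)/4}$ yields an explicit upper bound on the blowup time, namely the expression $t^{*}$ in \eqref{aa51}. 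Because $m(3-\gamma)/4<1$ for $m\in\{1,2\}$ and $\gamma\in(1,3)$, the time integral of the weight diverges, and $t^{*}\to 0$ as $|\tilde\beta(r^{*},0)|\to\infty$; choosing $N(b,T)$ large enough forces $t^{*}<T$.

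The principal difficulty I anticipate is the bookkeeping of the dependence $N=N(b,T)$: the density lower bound degrades in time like $e^{-\bar M_{b}t}$, so the weight controlling the Riccati term degrades accordingly, and larger $T$ forces larger $N$ both to make the remainder negligible and to make $t^{*}<T$. A secondary issue is the boundary case in Problem 2, where the initial strong compression appears on $B_{b}(t)$ rather than on $\{t=0\}$; there the characteristic integration starts from the boundary, but the estimates of Section \ref{S4} are uniform on $\Omega\cap\{t\leq T\}$ and apply verbatim, so only notational adaptation is needed. Everything else amounts to tracking constants.
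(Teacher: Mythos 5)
Your proposal follows the paper's own argument essentially step for step: you invoke the split Riccati reformulation \eqref{tildeab_3}--\eqref{tildeab4}, use the boundedness of $A_i, B_i$ and $mu^2/(rc_i)$ from Theorem \ref{thm_bounds} and \eqref{dd81} together with the one-sided upper bound of Lemma \ref{eqbar} (translated via \eqref{dd17} into a bound on $\tilde\alpha$, $\tilde\beta$), and — crucially, as you correctly emphasize — the density lower bound \eqref{dd14} obtained in Section \ref{S6} without any sign assumption on the initial $\alpha,\beta$, in order to absorb the bracketed remainder into the leading quadratic once $\tilde\beta\leq -N(b,T)$, then integrate the one-sided Riccati inequality along the characteristic using the lower bound \eqref{aa50} on $h^{(3-\gamma)/(2(\gamma-1))}$ and the convergence ensured by $m(3-\gamma)/4<1$ to produce the blowup time \eqref{aa51}. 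The only thing you add beyond the paper's exposition is an explicit forward-invariance remark for $\{\tilde\beta\leq -N(b,T)\}$, which the paper leaves implicit; this is a small but correct clarification, not a departure.
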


\section*{Acknowledgements}

The first and second author were partially supported by National Science Foundation (DMS-2008504, DMS-2306258), the third author was partially supported by National Natural
Science Foundation of China (12171130, 12071106), the fourth author was partially supported by National Science Foundation (DMS-2206218).

\end{document}